\newtheorem{thm}{Theorem}[section]
\newtheorem{conj}[thm]{Conjecture}
\newtheorem{lem}[thm]{Lemma}
\newtheorem{prop}[thm]{Proposition}
\newtheorem{cor}[thm]{Corollary}
\theoremstyle{definition}
\newtheorem{exmp}{Example}[section]
\theoremstyle{remark}
\newtheorem{rem}{Remark}
\theoremstyle{definition}
\newtheorem{defn}{Definition}[section]
\title{Variations on Ramsey numbers and minimum numbers of monochromatic triangles in line $2$-colorings of configurations}
\author{
  Jamie Bishop, Rebekah Kuss, Benjamin Peet Ph.D.\\
  Department of Mathematics\\
  St. Martin's University\\
  Lacey, WA 98503 \\
  \texttt{bpeet@stmartin.edu} \\
}
\begin{document}
\maketitle

\begin{abstract}
This paper begins by exploring some old and new results about Ramsey numbers and minimum numbers of monochromatic triangles in $2$-colorings of complete graphs, both in the disjoint and non-disjoint cases. We then extend the theory, by defining line $2$-colorings of configurations of points and lines and considering the minimum number of non-disjoint monochromatic triangles. We compute specific examples for notable symmetric $v_{3}$ configurations before considering a general result regarding the addition or connected sum of configurations through incidence switches. The paper finishes by considering the maximal number of mutually intersecting lines and how this relates to the minimum number of triangles given a line $2$-coloring of a symmetric $v_{3}$ configuration.
\end{abstract}

\keywords{Ramsey theory, configurations, colorings}
\textbf{2010 MSC Classification: 05C55, 51E30, 05B30} 

\section*{Acknowledgement}

We would like to thank Misha Lavrov for his very helpful contribution to Theorem 3.7 and connecting us with the relevant background literature for this paper, as well as the anonymous reviewer for their comprehensive report which greatly improved the paper.

\section{Introduction}

Ramsey numbers came from the mathematician, philosopher, and economist Frank Plumpton Ramsey. In the years since Ramsey, there have been many developments of Ramsey numbers and this paper seeks to first give an exposition of some different versions of Ramsey numbers specifically for triangles. In \cite{goodman1959sets}, the minimum number of monochromatic triangles given an edge $2$-coloring was considered. We continue the theory on to an adaptation of Ramsey theory for configurations of points and lines considering the minimum number of monochromatic triangles given a $2$-coloring.

Configurations of points and lines are a classical topic, but the literature has not brought these two subjects together. We discuss where graph theory and the theory of configurations diverge and how this relates to Ramsey theory. This paper gives a number of specific results regarding the number of non-disjoint monochromatic triangles in some notable configurations, as well as some results about the addition or connected sum (using incidence switches) of configurations and the maximum number of mutually intersecting lines.

\section{Preliminary definitions and examples}

\subsection{Graph theory} We include the following preliminary definitions for completeness. These can be found in most introductory texts on graph theory. We referenced \cite{harary1969graph}.

A \textit{graph} consists of finite sets $V$ and $X$. $V$ is the set of vertices and $X$ is the set of unordered pairs of distinct vertices of $V$. That is, the edges.

A \textit{subgraph} $G^{\prime}$ of a graph $G$ consists of a subset of vertices $V^{\prime}$ of $V$ and a subset $X^{\prime}$ of $X$ that contains only pairs of vertices from $V^{\prime}$.

An \textit{r-coloring} of a set $S$ is a map $x:s\rightarrow (r)$ for $s\in S$, $(x(s))$ is called the color of $s$. This paper will deal only with $2$-colorings or bi-colorings and as convention dictates consider these as red-blue edge colorings, hence $x(s)=\text{red}$ or $x(s)=\text{blue}$.

A subgraph of a colored graph is \textit{monochromatic} if all its edges have the same color.

A \textit{complete graph} $K_p$ is a graph of order $p$ with all possible pairs of vertices connected by an edge.

Two subgraphs of a given graph are \textit{vertex disjoint} if no vertices are shared. Two subgraphs of a given graph are \textit{edge disjoint} if no edges are shared. Note that vertex disjoint implies edge disjoint but not the converse. For convenience here we deal only with vertex disjoint and simply say \textit{disjoint}.

A sequence of distinct vertices $x_1 x_2 \ldots x_n$ is called a \textit{path} if $x_i x_i+1$ is an edge for all \textit{i} such that $1\leq i \leq n-1$. When $x_1 x_n$ is also an edge, the sequence $x
_1 x_2 \ldots x_n x_1$ is called \textit{cycle}.

\subsection{Ramsey theory}

The \textit{Ramsey number} $r(G_1,G_2)$ is defined to be the least number of vertices $p$ to guarantee that a red-blue colored ($2$-colored) complete graph on $p$ vertices contains a red subgraph isomorphic to $G_1$ or a complete blue subgraph isomorphic to $G_2$.

We then define the \textit{multiple Ramsey numbers} \cite{burr1975ramsey} as $r(kG_1,nG_2)$ being the least number of vertices $p$ to guarantee that a red-blue colored ($2$-colored) complete graph on $p$ vertices contains $k$ disjoint red subgraphs isomorphic to $G_1$ or $l$ disjoint blue subgraphs isomorphic to $G_2$. Here $kG_1$ represents $k$ disjoint copies of $G_1$.

\subsection{Configurations}

An incidence geometry is given by a pair $(\mathcal{P},\mathcal{L})$ where $\mathcal{P}=\{p_{1}, \ldots , p_{n}\} $; $\mathcal{L}=\{l_{1}, \ldots ,l_{m}\}$; each $l_{i}\subset \mathcal{P}$; and for any pair $p_{i_{1}},p_{i_{2}}$ there is at most one line $l_{j}$ that contains both elements. Naturally, the elements of $\mathcal{P}$ are known as points and the elements of $\mathcal{L}$ are known as lines.

An incidence geometry is further known as a configuration if each point is incident with the same number of lines as any other (denoted $s$); each line is incident with the same number of points as any other (denoted $k$); and there are at least three points.

Configurations are here considered in a purely combinatorial sense.

When a configuration has the parameters $(n,m,s,k)$, we term it a $(n_s,m_k)$-configuration. If $n=m$ (and consequently $s=k$ - see the foundational text on configurations of \cite{grunbaum2009configurations}) we call the configuration \textit{symmetric} and usually denote it a $v_k$ configuration according to the commonly used notation in papers such as \cite{betten2000counting} and \cite{boben2007irreducible}.

A \textit{Menger graph} is a representation of a configuration as an undirected graph where the points are shown as the vertices and the lines are made up by a collection of edges. See  \cite{van1947topology} and \cite{coxeter1947configurations}.

\section{Preliminary results}

The following is Ramsey's theorem. We used the version from \cite{do2019party}: 

\begin{thm}
For every pair of positive integers $m$ and $n$, the value of $r(K_m,K_n)$ is finite. In other words, there is a positive integer $N$ such that if the edges of $K_{N}$ are coloured red or blue, then there exists a red subgraph isomorphic to $K_{m}$ or a blue subgraph isomorphic to $K_{n}$.
\end{thm}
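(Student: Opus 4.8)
The plan is to prove finiteness by establishing the standard recursive upper bound $R(m,n) \le R(m-1,n) + R(m,n-1)$ and then inducting on $m+n$. First I would dispose of the base cases: $R(1,n) = R(m,1) = 1$, since a single vertex vacuously contains a red $K_1$ (or blue $K_1$), and $R(2,n) = n$, $R(m,2) = m$, since in any $2$-coloring of $K_n$ either some edge is red---yielding a red $K_2$---or every edge is blue, yielding a blue $K_n$. These anchor the induction and guarantee that all the quantities appearing on the right-hand side of the recursion are already known to be finite.

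For the inductive step, suppose $R(m-1,n)$ and $R(m,n-1)$ are both finite and set $N = R(m-1,n) + R(m,n-1)$. Given any red-blue coloring of $K_N$, I would fix a vertex $v$ and partition the remaining $N-1$ vertices into the set $S_r$ of endpoints of red edges from $v$ and the set $S_b$ of endpoints of blue edges from $v$. Since $|S_r| + |S_b| = N - 1 = R(m-1,n) + R(m,n-1) - 1$, the pigeonhole principle forces either $|S_r| \ge R(m-1,n)$ or $|S_b| \ge R(m,n-1)$.

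In the first case, the complete subgraph on $S_r$ contains, by definition of $R(m-1,n)$, either a blue $K_n$---which finishes immediately---or a red $K_{m-1}$; in the latter situation, adjoining $v$, which is joined to every vertex of $S_r$ by a red edge, produces a red $K_m$. The second case is symmetric, with the roles of the two colors and of $m,n$ interchanged, again yielding a red $K_m$ or a blue $K_n$. Hence $R(m,n) \le N < \infty$, completing the induction and showing that $R(m,n)$ is finite for all positive integers $m$ and $n$.

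The step I expect to require the most care is the pigeonhole argument together with the bookkeeping of which color and which index decreases in each branch: one must check that the degree count $N-1$ is exactly one short of forcing both thresholds simultaneously, so that at least one threshold is always met, and that the vertex $v$ can indeed be adjoined to a monochromatic clique of the matching color in each case. The remainder is routine once the recursion and base cases are in place.
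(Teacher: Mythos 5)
Your proof is correct and complete: the base cases, the pigeonhole count showing $N-1 = R(m-1,n)+R(m,n-1)-1$ forces at least one threshold, and the adjunction of $v$ to the red $K_{m-1}$ (or blue $K_{n-1}$) are all handled properly. Note that the paper itself states this theorem without proof, simply citing its source, and your argument is precisely the standard Erd\H{o}s--Szekeres recursion $R(m,n)\le R(m-1,n)+R(m,n-1)$ that the cited literature uses, so there is no divergence to report.
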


We now give a Lemma possible due to \cite{kery} that we will use in our new results:

\begin{lem}
Any red-blue coloring of $K_{5}$ that does not contain a red triangle or a blue triangle must be composed of a blue $5$-cycle and a red $5$-cycle.
\end{lem}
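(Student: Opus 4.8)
The plan is to prove this by a degree-counting argument at each vertex. Consider any vertex $v$ in the coloring of $K_5$. It has four incident edges, each colored red or blue. The key first step is to establish that every vertex must have exactly two red and two blue edges. Suppose instead that some vertex $v$ were incident with at least three edges of the same color, say three red edges to vertices $a$, $b$, $c$. Then among the three edges joining $a$, $b$, $c$ in pairs, if any one is red we immediately obtain a red triangle with $v$; so all three of $ab$, $bc$, $ca$ must be blue, producing a blue triangle. Hence three same-colored edges at a vertex force a monochromatic triangle, contradicting our hypothesis. Since four edges split into two colors without either color reaching three, the only possibility is a $2$-$2$ split at every vertex.

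Next I would translate this local condition into a global statement about the two color classes. Because each vertex has exactly two red edges, the red subgraph is $2$-regular on five vertices; likewise the blue subgraph is $2$-regular on five vertices. A $2$-regular graph is a disjoint union of cycles, and since each cycle needs at least three vertices, on five vertices the only way to be $2$-regular is to be a single $5$-cycle (a disjoint union of smaller cycles cannot use exactly five vertices with all parts of length at least three). This forces both the red subgraph and the blue subgraph to be $5$-cycles, which is exactly the claimed conclusion.

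The step I expect to require the most care is ruling out the degenerate possibilities for a $2$-regular graph on five vertices and confirming that a red $5$-cycle and a blue $5$-cycle genuinely partition all $\binom{5}{2}=10$ edges of $K_5$: the two $5$-cycles together have $5+5=10$ edges, matching exactly, so the decomposition is forced and consistent. One should also verify the mild point that a red $5$-cycle contains no red triangle (its shortest cycle has length five) and similarly for blue, so that such a configuration indeed avoids monochromatic triangles, confirming the characterization is not vacuous. The argument is clean once the $2$-$2$ split is established, so the local counting at a single vertex is really the crux of the proof.
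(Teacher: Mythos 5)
Your proof is correct, and it takes a genuinely different route from the paper. The paper proceeds by exhaustive case analysis: it starts from the two triangle-free $2$-colorings of $K_{4}$ (a red $4$-cycle with two blue diagonals, or two monochromatic $3$-chains) and extends each to a fifth vertex, tracking forced edge colors until all but one subcase dies and the surviving one exhibits the two $5$-cycles. You instead argue locally and structurally: no vertex can carry three edges of one color (else the classic neighborhood argument produces a monochromatic triangle), so every vertex has a $2$-$2$ color split, making each color class a $2$-regular graph on five vertices; since a $2$-regular graph is a disjoint union of cycles of length at least three, and $5$ admits no partition into parts $\geq 3$ other than $5$ itself, each color class is a single $5$-cycle, with the edge count $5+5=\binom{5}{2}$ confirming consistency. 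Your argument is shorter, conceptually cleaner, and self-contained, whereas the paper's casework implicitly leans on the (unproved) classification of triangle-free colorings of $K_{4}$ and requires careful bookkeeping of forced edges; your approach is also the standard one underlying the uniqueness of the extremal coloring for $R(3,3)$, so it generalizes more readily. Your closing verification that a $5$-cycle has girth five (so the characterization is non-vacuous) is not required by the statement as written, but it is a harmless and sensible sanity check.
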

\begin{proof} Take a 2-coloring of $K_5$ with no monochromatic triangle. Note that no vertex is incident with three or more edges of the same color - if $ab$, $ac$, and $ad$ are all red, the subgraph induced by ${b, c, d}$ cannot contain any red edges without creating a red triangle. Furthermore, if all of its edges are blue, then it creates a blue triangle. Hence, every vertex must be incident with exactly two red edges and two blue edges. 

Without loss of generality, label the vertices $a, b, c, d, e$. Let $ab$ and $ac$ be red and $ad$ and $ae$ be blue. So $bc$ is also blue, $de$ is red, and $b$ must join to exactly one of $d$ and $e$ as a red edge.  Again, without loss of generality, we can assume that $bd$ is red and $be$ is blue. This forces $cd$ to be blue and $ce$ to be red.  This gives the red cycle $abdeca$ and the blue cycle $adcbea$. 
\end{proof}

We now continue the theory by giving two more versions of Ramsey numbers. These are not new, just simplified versions of those seen in the literature where notation and terminology varies.

First of all we disregard which color the subgraph is. This leads us to:

\begin{defn}

Let $r(G_1,G_2:k)$ be the least number of vertices to guarantee a total of at least $k$ disjoint subgraphs that are either red and isomorphic to $G_1$ or blue and isomorphic to $G_2$.

\end{defn}

Note that in this paper we consider only when $G_1,G_2$ are the same complete graph, but if $m> n$ the specific statement of "$k$ disjoint subgraphs that are either red and isomorphic to $K_m$ or blue and isomorphic to $K_n$" as opposed to "$k$ disjoint monochromatic subgraphs that are isomorphic to $K_m$ or $K_n$" is required as a red $K_m$ would contain many copies of a red $K_n$.

We also then consider removing the disjoint condition.

\begin{defn}
We define $r^*(kG_1,lG_2)$ to be the least number of vertices to guarantee $k$ red subgraphs isomoprhic to $G_1$ or $l$ blue subgraphs isomorphic to $G_2$.
\end{defn}

The final definition is a combination of Definition 3.1 and 3.2:

\begin{defn}
We define $r^*(G_1,G_2:k)$ to be the least number of vertices to guarantee a total of at least $k$ subgraphs that are either red and isomorphic to $G_1$ or blue and isomorphic to $G_1$.
\end{defn}

We make the observation here that these numbers are closely related to Ramsey multiplicities. The general definition of a Ramsey multiplicity $R(G_1,G_2)$ is given by the smallest possible total number of subgraphs that are red and isomorphic to $G_1$ or blue and isomorphic to $G_2$ among all $2$-colorings of $K_{r(G_1,G_2)}$. Note the use of lower case $r$ for Ramsey number and upper case $R$ for Ramsey multiplicity. 

We give a quick example for exposition:

\begin{exmp}

We will see in Proposition 3.4 that $r^{*}(K_{3},K_{3}:2)=6$ and it is a first result of Ramsey theory that $r(K_{3},K_{3})=6$. Simply put, 6 vertices are required to guarantee a monochromatic triangle in any $2$-coloring and in fact this guarantees 2 of them.

Now $R(K_{3},K_{3})$ represents the least possible number of monochromatic triangles in $K_{r(K_3,K_3)}=K_6$. Clearly, $R(K_{3},K_{3})=2$. 
\end{exmp}

We take this example to prove the following general result:

\begin{prop}
    $r^{*}(G_1,G_2:R(G_1,G_2))= r(G_1,G_2)$
\end{prop}

\begin{proof}

    Any $2$-coloring of $K_{r(G_1,G_2)}$ contains at a minimum $R(G_1,G_2)$ subgraphs that are either red and isomorphic to $G_1$ or blue and isomorphic to $G_2$.

    Now $r^{*}(G_1,G_2:R(G_1,G_2))$ is the least number of vertices $p$ so that any $2$-coloring of $K_p$ contains at a minimum $R(G_1,G_2)$ subgraphs that are either red and isomorphic to $G_1$ or blue and isomorphic to $G_2$.

    Hence $r^{*}(G_1,G_2:R(G_1,G_2))\leq r(G_1,G_2)$.

    Now, by definition there is a $2$-coloring of $K_{r(G_1,G_2)-1}$ that contains no subgraphs that are either red and isomorphic to $G_1$ or blue and isomorphic to $G_2$. Hence it certainly cannot have at least $R(G_1,G_2)$ of them.

    So $r^{*}(G_1,G_2:R(G_1,G_2))>r(G_1,G_2)-1$ and the equality follows.
\end{proof}

These Ramsey multiplicities have been considered in many places, in particular \cite{burr1980ramsey}, \cite{conlon2007ramsey}, \cite{harary1974generalized}, and \cite{jacobson1980note}.

We now give a result without proof from \cite{burr1975ramsey}:

\begin{thm}
$r(kK_3,lK_3) =3k+2l$ for all $k\geq l\geq 1$ and $k\geq 2$.
\end{thm}

\begin{rem}
We give here inequalities that are useful in relating the various Ramsey numbers:

$$r(G_1,G_2)\leq r(kG_1,lG_2)$$
$$r(G_1,G_2)\leq r(G_1,G_2:k) \leq r(kG_1,lG_2)$$
$$r^{*}(kG_1,lG_2)\leq r(kG_1,lG_2)$$
$$r^{*}(G_1,G_2:k)\leq r(G_1,G_2:k)$$

\end{rem}

\subsection{Non-disjoint}

We now establish the following value for exposition:

\begin{prop}
$r^{*}(K_3,K_3:2)=6$
\end{prop}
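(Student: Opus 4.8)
The plan is to prove the equality by establishing the two matching bounds $R^{*}(3,3:2)\ge 6$ and $R^{*}(3,3:2)\le 6$, where by Definition 4.3 the quantity $R^{*}(3,3:2)$ is the least number of vertices forcing at least two monochromatic triangles (not required to be disjoint, nor of the same colour) in every red-blue colouring of $K_n$. The lower bound is immediate from Lemma 3.2: the colouring of $K_5$ given there, a red $5$-cycle together with a blue $5$-cycle, contains \emph{no} monochromatic triangle whatsoever, and in particular fewer than two. Hence five vertices do not suffice, so $R^{*}(3,3:2)\ge 6$.

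For the upper bound I need to show that \emph{every} red-blue colouring of $K_6$ contains at least two monochromatic triangles, and my preferred route is a global counting argument in the spirit of Goodman \cite{goodman1959sets}. Call a triangle \emph{bichromatic} if it uses edges of both colours, and for each vertex $v$ let $d_R(v)$ and $d_B(v)$ denote the numbers of red and blue edges incident with $v$. I would count \emph{mixed angles}, that is, pairs of oppositely-coloured edges meeting at a common vertex; the number of mixed angles at $v$ is exactly $d_R(v)\,d_B(v)$, so the total number of mixed angles equals $\sum_v d_R(v)\,d_B(v)$. In $K_6$ every vertex has degree $5$, whence $d_R(v)+d_B(v)=5$ and $d_R(v)\,d_B(v)\le 2\cdot 3=6$; summing over the six vertices bounds the total number of mixed angles by $36$.

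The crux of the argument, and the step I expect to be the main obstacle, is the bookkeeping identity relating mixed angles to triangles: a monochromatic triangle contributes zero mixed angles, while a bichromatic triangle (necessarily a $2$--$1$ colour split) contributes exactly two, namely the two angles at the endpoints of its odd-coloured edge. Once this is verified, the number of bichromatic triangles is $\tfrac{1}{2}\sum_v d_R(v)\,d_B(v)\le 18$, and since $K_6$ has $\binom{6}{3}=20$ triangles in total, at least $20-18=2$ of them must be monochromatic. Combining this with the lower bound yields $R^{*}(3,3:2)=6$. An alternative to the counting identity would be a deletion argument using Lemma 3.2 directly: assuming $K_6$ had a unique monochromatic triangle $T$, deleting in turn each of the three vertices of $T$ would leave a $K_5$ with no monochromatic triangle, forcing each such $K_5$ to be a red $5$-cycle plus a blue $5$-cycle, and the incompatibility of these three overlapping structures would give a contradiction; I would regard the resulting case analysis, rather than the clean counting identity, as the heavier of the two approaches.
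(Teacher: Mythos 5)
Your proposal is correct, and the lower bound (the double-$5$-cycle colouring of $K_5$ from Lemma 3.2 has no monochromatic triangle) coincides with the paper's. For the upper bound, however, you take a genuinely different route. The paper argues structurally: it invokes $R(3,3)=6$ to obtain one monochromatic triangle, assumes it is unique, deletes a vertex so that Lemma 3.2 forces the remaining $K_5$ into the red/blue double-pentagon form, and then derives a contradiction by a short case check on the edges back to the deleted vertex --- this is essentially the ``deletion argument'' you sketch and set aside as heavier. You instead give the Goodman-style count: mixed angles at $v$ number $d_R(v)\,d_B(v)\le 2\cdot 3=6$, each bichromatic triangle carries exactly two mixed angles (at the endpoints of its odd-coloured edge, which is the identity you flag as the crux, and it checks out), so at most $\tfrac{1}{2}\cdot 36=18$ of the $\binom{6}{3}=20$ triangles are bichromatic, leaving at least two monochromatic. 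Your computation is clean and correct, and it is in fact the specialization to $n=6$ of Theorem 3.6 (Goodman's formula, which gives $20-\left\lfloor 3\left\lfloor 25/4\right\rfloor\right\rfloor=2$), which the paper states immediately after this proposition; so your argument generalizes at no extra cost and avoids case analysis entirely. What the paper's structural argument buys instead is explicit knowledge of the near-extremal configuration (blue pentagon plus centre vertex), which is reused in the base case of Theorem 3.8 on disjoint triangles --- your counting bound certifies the number two but says nothing about where the triangles sit, which is what the disjointness argument later needs.
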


\begin{proof}
It is clear that $n=5$ is insufficient by Lemma 3.2.

Figure 1 establishes that $n=6$ is sufficient.

\begin{figure}[h]
\centering

\includegraphics[ height=4cm]{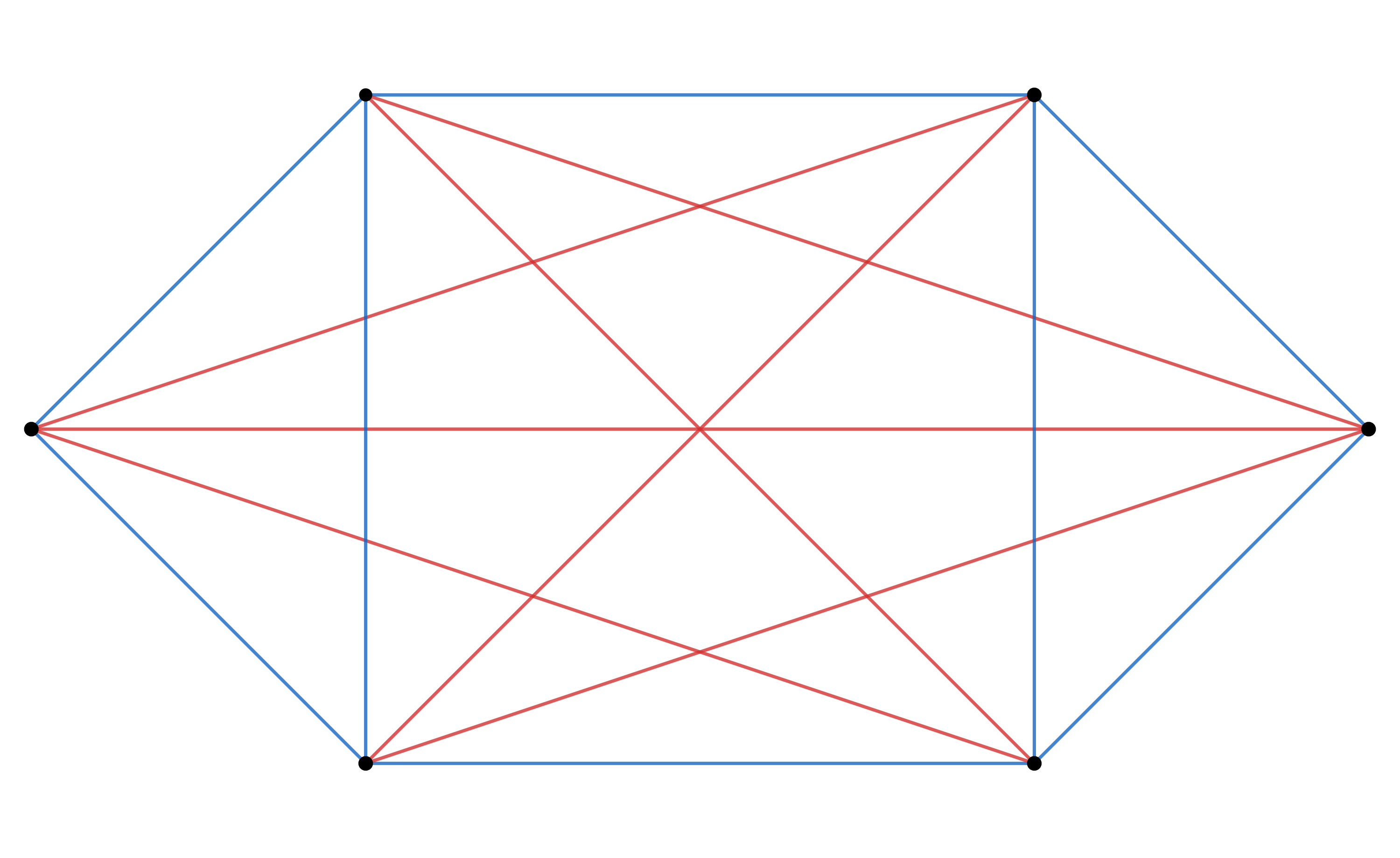}
\caption{Complete graph on six vertices with two monochromatic triangles}

\end{figure}

By the result that $r(K_3,K_3)=6$ \cite{do2019party}, we can assume a monochromatic triangle and without loss of generality blue. If this were the only triangle, removing a vertex (and all associated edges) would leave us in the situation of Lemma 3.2.

Then by Lemma 3.2 we have an all blue pentagon labelled with vertices $a,b,c,d,e$. Add a center $x$ (the vertex of the original monochromatic triangle) and two blue edges to make the known blue triangle $\{d,e,x\}$.

Then if there are no more monochromatic triangles then $ax$ and $bx$ are red. But if $ab$ is blue this gives a blue triangle $\{a,b,c\}$ and if $ab$ is red this gives a red triangle $\{a,b,x\}$. Hence, there are at least two monochromatic triangles.

\end{proof}

We now state the result of Goodman in \cite{goodman1959sets} (which generalizes the above result) and which was proved more succinctly in \cite{schwenk1972acquaintance}.

\begin{thm}
The minimum number of monochromatic triangles in a bi-colored complete graph on $n$ vertices is given by:

$${n \choose 3}-\left\lfloor  \frac{n}{2} \left\lfloor \frac{(n-1)^{2}}{4} \right\rfloor\right \rfloor$$
\end{thm}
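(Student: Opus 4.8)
The plan is to count the complement: rather than counting monochromatic triangles directly, I would count the non-monochromatic (``mixed'') triangles and subtract from $\binom{n}{3}$. The key device is to count, at each vertex, the incident pairs of edges receiving different colors. For each vertex $v$, write $r_v$ and $b_v$ for the numbers of red and blue edges incident with $v$, so that $r_v + b_v = n-1$, and call $v$ the apex of a \emph{bichromatic angle} whenever it is the common endpoint of one red and one blue edge; there are exactly $r_v b_v$ such angles at $v$. The crucial local observation is that a monochromatic triangle contains no bichromatic angle, whereas a non-monochromatic triangle (which necessarily has two edges of one color and one of the other) contains exactly two. Summing over all vertices therefore double-counts the mixed triangles, giving
$$\#\{\text{mixed triangles}\} = \frac{1}{2}\sum_{v} r_v b_v,$$
and hence
$$\#\{\text{monochromatic triangles}\} = \binom{n}{3} - \frac{1}{2}\sum_{v} r_v b_v.$$

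To minimize the left-hand side I would maximize $\sum_v r_v b_v$. Since $r_v + b_v = n-1$ is fixed, the product of the two nonnegative integers is largest when they are as equal as possible, so $r_v b_v \le \big\lfloor (n-1)^2/4 \big\rfloor$ for every $v$. Summing this over the $n$ vertices, and using that the number of triangles is an integer so that the half-sum is bounded by the floor of $\tfrac{n}{2}\lfloor (n-1)^2/4\rfloor$, yields
$$\#\{\text{monochromatic triangles}\} \ge \binom{n}{3} - \left\lfloor \frac{n}{2}\left\lfloor \frac{(n-1)^2}{4}\right\rfloor \right\rfloor,$$
which is exactly the claimed lower bound.

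For tightness I would exhibit a coloring meeting this bound by taking the red subgraph to be as close to $\tfrac{n-1}{2}$-regular as possible, for instance a circulant graph on $\mathbb{Z}_n$ in which each vertex is joined in red to a fixed block of nearest neighbors. When $n$ is even, or when $n \equiv 1 \pmod 4$, an honestly (near-)regular red graph exists with every $r_v b_v = \lfloor (n-1)^2/4\rfloor$, and the resulting value $\tfrac12\sum_v r_v b_v$ is an integer equal to the nested floor, so the bound is attained exactly. The delicate case is $n \equiv 3 \pmod 4$: here a $\tfrac{n-1}{2}$-regular graph cannot exist because the handshake parity fails, so at least one vertex must be unbalanced, forcing $\sum_v r_v b_v$ strictly below $n\lfloor (n-1)^2/4\rfloor$; I would verify that this unavoidable deficiency is precisely the amount absorbed by the outer floor, so that the near-regular circulant still realizes the stated minimum.

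I expect this last tightness step in the $n \equiv 3 \pmod 4$ regime to be the main obstacle. The lower-bound inequality is soft and essentially automatic, but matching it requires the genuinely arithmetic verification that no coloring can outperform the near-regular one and that the near-regular circulant's triangle count equals the nested floor exactly. The per-vertex maximization is clean; reconciling it with the global handshake parity constraint, and confirming the floors line up in all residue classes of $n$, is where the real work lies.
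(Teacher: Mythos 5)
Your argument is correct, but there is nothing in the paper to compare it against: the paper states this theorem without proof, citing Goodman (1959) for the original argument and Schwenk (1972) for a shorter one. What you have written is, in substance, exactly Schwenk's proof. The bichromatic-angle count is right (a mixed triangle has two edges of one color and one of the other, hence exactly two vertices where a red and a blue edge meet, and a monochromatic triangle has none), giving $\#\{\text{mixed}\}=\frac{1}{2}\sum_v r_v b_v$, and combining $r_v b_v\le\lfloor (n-1)^2/4\rfloor$ with the integrality of the mixed count yields the nested-floor lower bound. Your case analysis for sharpness is also sound, and you correctly isolate the only delicate residue class. For the record, the $n\equiv 3\pmod 4$ verification you defer does close cleanly: writing $N=n(n-1)^2/4$, which is odd in this case, the parity obstruction forces some vertex to have $r_v\ne (n-1)/2$, and a deviation of $t$ costs $t^2$ in $r_v b_v$, so $\sum_v r_v b_v\le N-1$ and $\#\{\text{mixed}\}\le (N-1)/2=\lfloor N/2\rfloor$, matching the outer floor exactly; equality is attained by any red graph with one vertex of degree $(n\pm 1)/2$ and all others of degree $(n-1)/2$, a degree sequence that is graphical (e.g.\ by Erd\H{o}s--Gallai, or by adding a near-perfect matching to an even-regular circulant). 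A sanity check at $n=7$ gives $\binom{7}{3}-\lfloor \frac{7}{2}\cdot 9\rfloor = 35-31=4$, the known minimum for $K_7$. So your proposal is a complete and correct route to a result the paper only quotes; the one step you flagged as the ``real work'' is indeed the only nontrivial verification, and it goes through as you predicted.
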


\subsection{Disjoint}

The following is a Theorem which as far as we aware has not appeared in the literature:

\begin{thm}
$r(K_3,K_3:k)=3k+2$ for $k\geq2$.
\end{thm}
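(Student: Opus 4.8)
The plan is to establish the two inequalities $R(3,3:k)>3k+1$ and $R(3,3:k)\le 3k+2$ separately. The lower bound will come from an explicit coloring of $K_{3k+1}$ admitting only $k-1$ disjoint monochromatic triangles, and the upper bound will be an induction on $k$ whose base case $k=2$ (namely that every $2$-coloring of $K_{8}$ contains two vertex-disjoint monochromatic triangles) carries essentially all of the difficulty. I expect that base case to be the main obstacle, since the naive greedy strategy of repeatedly deleting a monochromatic triangle fails exactly at the end: removing one triangle from $K_8$ leaves a $K_5$, which by Lemma 3.2 may be colored with no monochromatic triangle at all.

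For the lower bound I would partition the vertex set of $K_{3k+1}$ into a set $A$ with $|A|=3k-1$ and a set $C$ with $|C|=2$, color every edge inside $A$ and inside $C$ blue, and color every edge between $A$ and $C$ red. Then the red graph is the bipartite graph $K_{3k-1,2}$ and hence triangle-free, so every monochromatic triangle is blue. Since there are no blue edges between $A$ and $C$, the blue graph is the disjoint union $K_{3k-1}\sqcup K_2$, and the $K_2$ part contributes nothing. Therefore any family of disjoint monochromatic triangles lies inside $K_{3k-1}$ and has size at most $\lfloor(3k-1)/3\rfloor=k-1$. This shows $3k+1$ vertices do not force $k$ disjoint monochromatic triangles, so $R(3,3:k)\ge 3k+2$.

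For the base case I would argue by contradiction: suppose some $2$-coloring of $K_8$ has no two vertex-disjoint monochromatic triangles. As $8\ge R(3,3)=6$ there is a monochromatic triangle, say a blue one on $\{a,b,c\}$, and the remaining five vertices $P$ can contain no monochromatic triangle (such a triangle would be disjoint from $\{a,b,c\}$), so by Lemma 3.2 they carry a blue $C_5$ and a red $C_5$. Each of $a,b,c$ sends five edges into $P$, at least three of one color; since any three vertices of a $5$-cycle span an edge of that cycle, this yields for each of $a,b,c$ a monochromatic triangle formed by that vertex together with two vertices of $P$. If two of these three triangles use disjoint pairs of pentagon vertices they are vertex-disjoint, a contradiction. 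The delicate point, and the main obstacle, is that the three pentagon-pairs might pairwise intersect; resolving this requires exploiting the freedom in which cycle-edge is chosen and analyzing how the blue and red $5$-cycles on $P$ interact, a finite local case analysis on the pentagon that forces a disjoint pair and completes the contradiction, giving $R(3,3:2)\le 8$.

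Finally, for $k\ge 3$ I would run the induction: since $3k+2\ge 6$ there is a monochromatic triangle in $K_{3k+2}$; deleting its three vertices leaves a $2$-colored $K_{3(k-1)+2}$, and because $k-1\ge 2$ the inductive hypothesis produces $k-1$ pairwise disjoint monochromatic triangles there. Together with the deleted triangle these form $k$ disjoint monochromatic triangles, so $R(3,3:k)\le 3k+2$. Combining this with the construction above yields $R(3,3:k)=3k+2$ for all $k\ge 2$.
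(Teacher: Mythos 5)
Your lower bound and your inductive step are correct and essentially identical to the paper's: the same red $K_{3k-1,2}$-plus-blue-cliques coloring of $K_{3k+1}$, and the same delete-a-triangle induction. The problem is the base case $k=2$, which you yourself identify as carrying ``essentially all of the difficulty'' --- and which you then do not prove. You reduce to the situation of a blue triangle $abc$ with the remaining five vertices $P$ colored as in Lemma 3.2 (blue $C_5$ plus red $C_5$), observe that each apex yields a monochromatic triangle through a cycle-edge of $P$, and assert that a ``finite local case analysis on the pentagon forces a disjoint pair'' of such apex triangles. That assertion is not merely unproved; as stated, it is false. Take the blue pentagon $p_1p_2p_3p_4p_5$ and join each of $a,b,c$ by blue edges to exactly $p_1,p_2,p_3$ and by red edges to $p_4,p_5$. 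Then for every apex $x$ the only monochromatic triangles of the form $\{x,p,q\}$ use the pairs $p_1p_2$ or $p_2p_3$ (the red side gives nothing, since $p_4p_5$ is a blue cycle-edge), and these pairs pairwise intersect at $p_2$. So no amount of ``freedom in which cycle-edge is chosen'' produces two disjoint apex triangles. Two disjoint monochromatic triangles do exist in this coloring --- e.g.\ the blue triangles $\{a,b,p_1\}$ and $\{c,p_2,p_3\}$ --- but they require triangles containing \emph{two} vertices of the original triangle $abc$, a type of triangle your sketch never considers. Any completion of your argument must incorporate these two-apex triangles, and that is precisely the part you have waved away.

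For comparison, the paper's base case takes a different route entirely: it starts from Proposition 3.5 (two not-necessarily-disjoint monochromatic triangles already in six vertices), assumes the two triangles intersect, and then does a case analysis --- the two triangles have opposite colors, share an edge, or share only a vertex --- repeatedly applying Lemma 3.2 to various $5$-subsets of the eight vertices to force long red $5$-cycles and eventually exhibit two disjoint triangles. Your complement-pentagon strategy is a legitimate alternative skeleton (it resembles the standard counting proofs in this area), and with the two-apex triangles added the finite check is genuinely doable; but as submitted, the heart of the theorem is an unproven claim whose advertised mechanism has a concrete counterexample, so the proof is incomplete.
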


\begin{proof}
We begin by showing that $3k+1$ vertices are not enough. We construct a $2$-colored $K_{3k+1}$ by taking a blue $K_{3k-1}$, a blue $K_2$ and then joining the two by red edges.

Then necessarily there are no red monochromatic triangles and can only be $k-1$ disjoint blue triangles from the blue $K_{3k-1}$ subgraph.

We now proceed by induction on $k$ to show that $3k+2$ is sufficient.

Consider a red/blue coloring of $K_8$ and assume that it does not contain at least two disjoint monochromatic triangles. Proposition 3.5 establishes that there are at least two non-disjoint monochromatic triangles. If they are disjoint then the result follows, hence we assume they are not. Then, consider the following cases:

\underline{Case 1:} Suppose that in the $2$-coloring of $K_8$, there is a blue triangle and a red triangle, whose vertices are labeled in Figure 2.


\begin{figure}[ht]
\centering
\includegraphics[height=3cm]{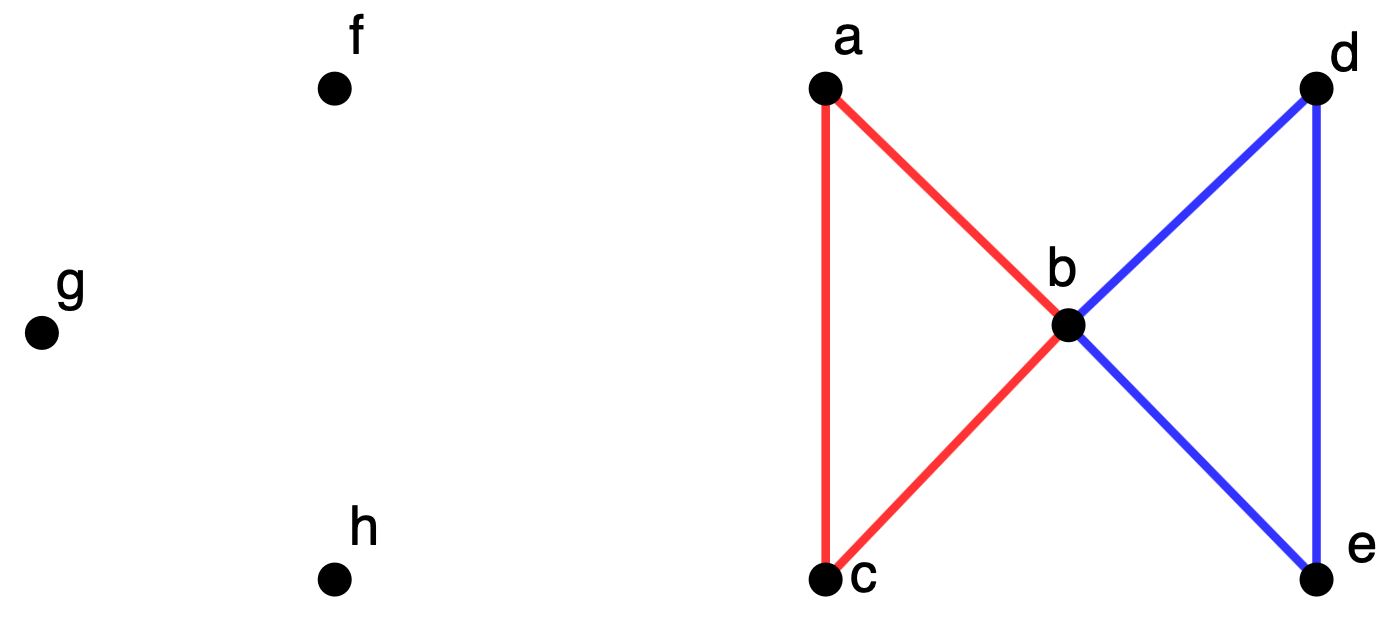}
\caption{One red triangle and one blue triangle}
\end{figure}

Now consider the subgraph induced by $\{a,c,f,g,h\}$. If it contains a monochromatic triangle, then that triangle and the triangle formed by $\{b,d,e\}$ gives at least two disjoint monochromatic triangles, which is a contradiction to our assumption. Then the subgraph generated by $\{a,b,f,g,h\}$ must be colored according to Lemma 3.2, and edge $ac$ must be included in the red $5$-cycle. It follows that the subgraph induced by $\{f,g,h\}$ must contain two red edges, say edges $fg$ and $gh$ must be red, otherwise one of the vertices in $\{e,f,g\}$ will join both $a$ and $c$ to form a monochromatic triangle that is disjoint from the blue triangle with vertices $\{b,d,e\}$.

Similarly, using a similar logic as above, if we consider the subgraph of $\{d,e,f,g,h\}$, the subgraph must be colored in accordance with Lemma 3.2, and edge $de$ must be included in the blue $5$-cycle. It follows that the subgraph induced by $\{f,g,h\}$ must contain two blue edges, say edges $fg$ and $gh$ must be blue to form a $5$-cycle. Since $\{f,g,h\}$ only has three edges, we obtain a contradiction. Hence all monochromatic triangles are the same color.

\underline{Case 2:} Suppose that in the $2$-coloring of $K_8$, there are exactly two same-colored triangles, say red, that share an edge, whose vertices are labeled in Figure 3.

\begin{figure}[ht]
\centering
\includegraphics[height=3cm]{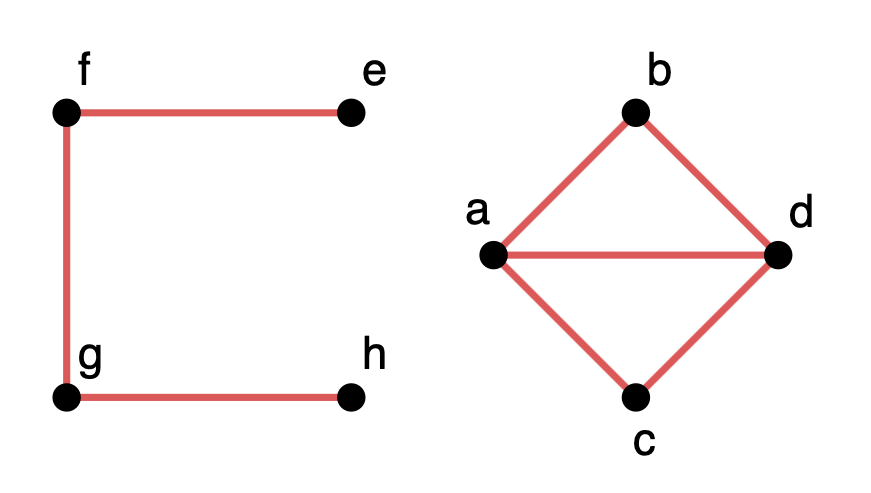}
\caption{Two red triangles sharing an edge}
\end{figure}

According to Lemma 3.2, so that the subgraph generated by $\{c,e,f,g,h\}$ does not yield a monochromatic triangle, we must have that without loss of generality $cefghc$ is a red $5$-cycle. But then to avoid a monochromatic triangle from the subgraph generated by $\{b,e,f,g,h\}$ we must have that $befghb$ is a red $5$-cycle. But then again by Lemma 3.2, edges $bg$ and $gc$ must be blue. So that we do not have a blue triangle, $bc$ must be red. Repeating this argument, we see that $aefgha$ and $defghd$ must be red $5$-cycles. Then we yield two disjoint red triangles generated by $\{b,e,d\}$ and $\{a,c,h\}$, this is in contradiction to our assumption.

\underline{Case 3:} Suppose that in the $2$-coloring of $K_8$, there are two same-colored, say red, triangles that share only one vertex, whose vertices are labeled in Figure 4.

By a similar argument to the one used twice in case 1, the subgraph generated by $\{f,g,h\}$ must contain two red edges and without loss of generality we assume that $fg$ and $gh$ are red also.

\begin{figure}[ht]
\centering
\includegraphics[height=3cm]{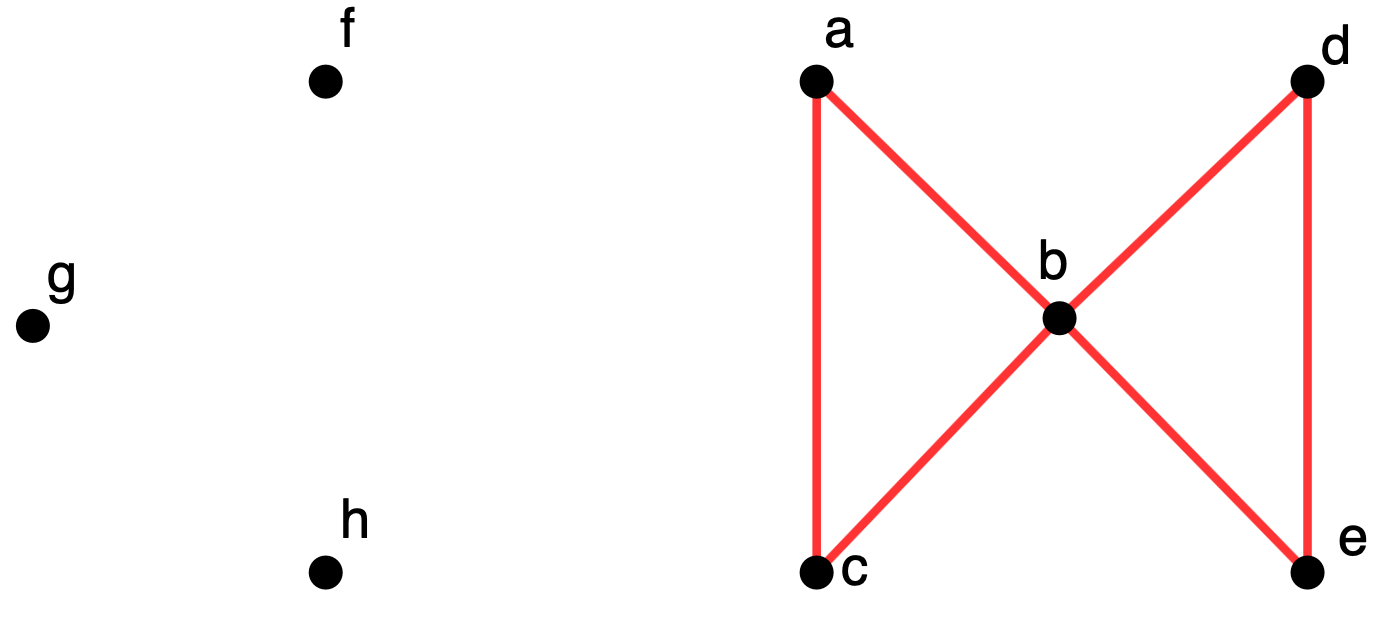}
\caption{Two red triangles}
\end{figure}

Then again without further loss of generality, we assume that $afghca$ and $dfghed$ are the red $5$-cycles. Then $ad$ must be red to avoid a blue triangle $\{a,g,d\}$ and $ce$ must be red to avoid a blue triangle $\{c,g,e\}$. But then we have red triangles $\{a,f,d\}$ and $\{c,e,h\}$ which are disjoint. Another contradiction.

Hence there are at least two disjoint monochromatic triangles. Note that three triangles cannot be disjoint by the pigeon hole principle. Hence there are exactly two disjoint monochromatic triangles. 

For the inductive step, suppose that the result is true for $k$ monochromatic triangles. So taking a $2$-colored complete graph on $3(k+1)+2=3k+5$ vertices we can assume the existence of at least one monochromatic triangle. Deleting the vertices of this triangle (and all associated edges) leaves a $2$-colored complete graph on $3k+2$ vertices which by the inductive hypothesis contains at least $k$ monochromatic triangles. Hence the $2$-colored complete graph on $3(k+1)+2$ vertices contains $k+1$ monochromatic triangles as required.

\end{proof}

\begin{cor}
The minimum number of disjoint monochromatic triangles in a 2-coloring of a complete graph on $n$ vertices is: $$\left\lfloor  \frac{n-2}{3}  \right\rfloor$$
\end{cor}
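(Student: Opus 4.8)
The plan is to derive the formula directly from Theorem 3.7 by translating between the threshold function $R(3,3:k)$ and the minimum-count function. Let $f(n)$ denote the minimum number of disjoint monochromatic triangles over all $2$-colorings of $K_n$. The key observation I would establish first is that $f(n)\ge k$ holds precisely when $n\ge R(3,3:k)$: by definition $R(3,3:k)$ is the least number of vertices forcing $k$ disjoint monochromatic triangles, so every coloring of $K_n$ contains at least $k$ of them exactly when $n$ reaches this threshold. Consequently $f(n)=\max\{k : R(3,3:k)\le n\}$, and the whole argument reduces to reading off this maximum from the values $R(3,3:k)=3k+2$.

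For the lower bound, I would set $k=\lfloor (n-2)/3\rfloor$ so that $3k+2\le n$; for $k\ge 2$ Theorem 3.7 gives $R(3,3:k)=3k+2\le n$, forcing at least $k$ disjoint monochromatic triangles, hence $f(n)\ge k$. For the matching upper bound, note that $k=\lfloor(n-2)/3\rfloor$ also yields $n<3(k+1)+2=R(3,3:k+1)$. Since $n$ lies strictly below the threshold that guarantees $k+1$ disjoint triangles, the very definition of $R(3,3:k+1)$ hands us a $2$-coloring of $K_n$ with at most $k$ disjoint monochromatic triangles, so $f(n)\le k$. Combining the two inequalities gives $f(n)=k=\lfloor(n-2)/3\rfloor$ for all $n\ge 8$, where the relevant indices are at least $2$ and Theorem 3.7 applies on the nose.

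The main obstacle is the low end of the range, where Theorem 3.7 does not apply because $R(3,3:1)=R(3,3)=6$ rather than $3(1)+2=5$. I would treat $n\le 7$ by hand: Lemma 3.2 shows $f(5)=0$, so the formula is genuinely valid only for $n\ne 5$ (equivalently, it should be read for $n\ge 6$), while for $n=6,7$ the bound $f(n)\ge 1$ comes from $R(3,3)=6$ and the bound $f(n)\le 1$ from $R(3,3:2)=8>7$, matching $\lfloor(n-2)/3\rfloor=1$. The crux of the corollary is thus entirely the translation identity $f(n)=\max\{k:R(3,3:k)\le n\}$ together with careful boundary bookkeeping around $k=1$, after which the closed form drops out of Theorem 3.7.
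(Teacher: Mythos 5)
Your proposal is correct and follows essentially the same route as the paper: the paper's entire proof is the one-line remark that the formula is the ``least integer inverse'' of $n=3k+2$ from Theorem 3.7, which is exactly your translation identity $f(n)=\max\{k : R(3,3:k)\le n\}$ made precise. What you add, and what the paper silently skips, is the boundary bookkeeping: Theorem 3.7 only covers $k\ge 2$ (i.e.\ $n\ge 8$), and your check of the small cases is not mere pedantry, since it exposes that the corollary as literally stated fails at $n=5$, where $\lfloor (n-2)/3 \rfloor = 1$ yet Lemma 3.2 supplies a triangle-free $2$-coloring of $K_5$, so the true minimum is $0$. Your handling of $n=6,7$ (lower bound from $R(3,3)=6$, upper bound from $R(3,3:2)=8>7$) correctly completes the range $n\ge 6$ on which the formula does hold. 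In short: same approach as the paper, executed more carefully, and your boundary analysis amounts to a genuine correction of the statement's range of validity.
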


\begin{proof}
This follows as the least integer inverse of $n=3k+2$ as established by Theorem 3.7.
\end{proof}

These results  lead us into the theory of configurations by an approach that asks what the minimum number of triangles in a particular graph is as opposed to how many vertices are required to have a certain minimum number of monochromatic triangles. 

Ramsey numbers are dependent upon the fact that complete graphs form a totally ordered set. Configurations of points and lines certainly do not form a totally ordered set. However, as is recognized in \cite{graham1990ramsey}, we can always compute minimum numbers of monochromatic triangles given a particular graph. We will take this approach with some specific configurations that are symmetric with three points per line.

\section{Ramsey theory for configurations}

We begin by making the note that by  \cite{mendelsohn1987planar}, a Menger graph does not uniquely determine the configuration. Hence, Ramsey theory on configurations cannot be simply defined by Ramsey theory on the Menger graph.

As this paper deals entirely with the number of monochromatic triangles, we must define what we mean by a triangle in a configuration.

\begin{defn}

We say that the points $p_{1},p_{2},p_{3} \in \mathcal{P}$ form a triangle if there are lines $l_{1},l_{2},l_{3}$ such that:
\begin{enumerate}
    \item $p_{1},p_{2}\in l_{1}$ but $p_{3}\not\in l_{1}$
    \item $p_{2},p_{3}\in l_{2}$ but $p_{1}\not\in l_{2}$
    \item $p_{1},p_{3}\in l_{3}$ but $p_{2}\not\in l_{3}$
\end{enumerate}
\end{defn}

Note that by definition, the lines $l_{1},l_{2},l_{3}$ are unique as given any two points there is only one line that contains both. Hence, given a triangle it is equivalent to give the three points $p_{1},p_{2},p_{3}$ or the three lines $l_{1},l_{2},l_{3}$.

\begin{exmp}
For the Fano configuration, the usual Menger graph is given by figure 5.

\begin{figure}[ht]
\centering
\includegraphics[height=3cm]{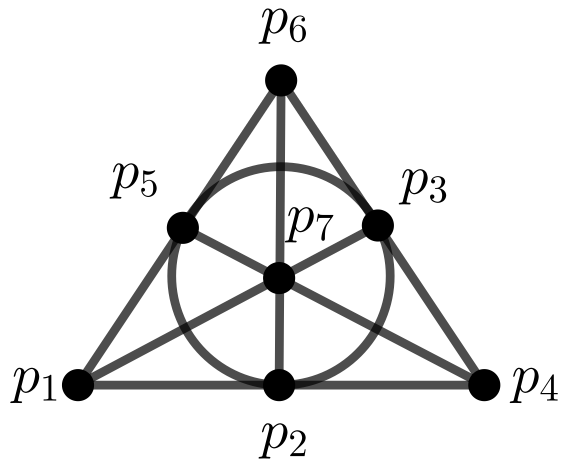}
\label{fig:young1}
\caption{Fano configuration}
\end{figure}

As a graph, the points (vertices) $p_{1},p_{4},p_{6}$ do not form a triangle, yet as a configuration they certainly do.
\end{exmp}

From this example we also see that disjointness is less clear. Triangle $p_{1},p_{2},p_{5}$ is disjoint from triangle $p_{3},p_{4},p_{7}$ as graph triangles, but $p_{2}$ and $p_{4}$ lie on the same line so can they be considered disjoint in a configuration context? To avoid this ambiguity we will stick to the non-disjoint case.

We now give the following definition:

\begin{defn}
Given a configuration, we define a line 2-coloring to be an assigning of the color red or blue to each line of the configuration.

A triangle is monochromatic if each of the lines $l_{1},l_{2},l_{3}$ are the same color.

\end{defn}

We now immediately give the following results:

\begin{thm}
The minimum number of monochromatic triangles in a line 2-coloring of the Fano configuration is 4.
\end{thm}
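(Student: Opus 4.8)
The Fano configuration has 7 points and 7 lines, with each line containing 3 points and each point lying on 3 lines. A line 2-coloring assigns red or blue to each of the 7 lines, so there are $2^7 = 128$ colorings to consider in principle, but symmetry and counting arguments should reduce this dramatically.

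My plan is to first understand what a triangle means here. By Definition 4.1, three points form a triangle when each pair lies on a common line and the three lines are distinct. In the Fano plane, \emph{every} set of three non-collinear points forms a triangle, since any two points determine a unique line. So I would begin by counting the total number of triangles in the Fano configuration: there are $\binom{7}{3} = 35$ triples of points, of which $7$ are collinear (the lines themselves), leaving $35 - 7 = 28$ triangles. Each triangle is determined by a choice of three lines meeting pairwise in three distinct points, and a monochromatic triangle requires those three lines to receive the same color.

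Next I would set up a counting argument in the spirit of Goodman's theorem. The key idea is to count, for each point, the pairs of lines through it and classify them by color. Each point lies on exactly $3$ lines; a "bichromatic angle" at a point is a pair of lines of different colors through that point. If $r$ lines are colored red and $b = 7 - r$ blue, I would count non-monochromatic triangles by summing bichromatic incidences. Specifically, I would aim to show that the number of monochromatic triangles equals the total number of triangles ($28$) minus the number of triangles that have at least one pair of differently-colored lines, and bound the latter using a convexity or averaging argument over the distribution of red/blue lines at each point. The plan is to express the count of non-monochromatic triangles in terms of the color distribution and optimize.

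The main obstacle, I expect, will be handling the special incidence structure of the Fano plane rather than a generic configuration: because every pair of lines meets in exactly one point and every pair of points lies on exactly one line, the naive double-counting of bichromatic pairs does not cleanly separate the way it does for complete graphs, and I must be careful that each triangle's three defining lines genuinely form a valid triangle (the three lines must be pairwise non-concurrent, i.e. not all through one point). So I would likely split into cases based on the number of red lines $r \in \{0,1,\dots,7\}$, using the automorphism group of the Fano plane (which acts transitively on lines, pairs of lines, and so on) to reduce to essentially one representative coloring for each value of $r$, then directly count monochromatic triangles in each case and verify the minimum is $4$. The hardest single step is establishing the lower bound cleanly—showing $4$ cannot be beaten—while the upper bound should follow by exhibiting one explicit coloring (for instance a balanced $3$--$4$ split chosen to spread colors across points) achieving exactly $4$ monochromatic triangles.
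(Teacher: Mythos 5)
Your route is genuinely different from the paper's, so it is worth comparing before flagging the gap. The paper never counts triangles globally: it exhibits an explicit coloring with four monochromatic triangles, then proves the lower bound by a forcing argument, assuming in turn that there are zero, one, or exactly two monochromatic triangles and showing each assumption forces at least four (for instance, a single red triangle forces the remaining four lines blue, which creates three blue triangles). You instead propose an exact count over coloring classes, which is workable --- but one step as written is false. The automorphism group of the Fano plane is transitive on lines and on unordered pairs of lines, but \emph{not} on triples of lines: a set of three lines is either concurrent (all through one point) or forms a triangle, and these are distinct orbits. So for $r=3$ red lines (and dually $r=4$) there are two inequivalent colorings, not ``essentially one representative for each value of $r$.'' A correct version of your case analysis has ten orbits in all; both $r=3$ orbits happen to give exactly four monochromatic triangles ($0$ red plus $4$ blue in the concurrent case, $1$ red plus $3$ blue in the triangle case, the latter being the paper's minimal coloring), so the theorem survives, but a proof checking one representative per $r$ would be incomplete as it stands.

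The Goodman-style branch of your plan can in fact be completed, and more cleanly than your sketch anticipates; the genuine gap there is a missing structural lemma rather than the ``convexity or averaging'' step you gesture at, which on its own does not close. Note first that each pair of lines through a point $p$ lies in exactly four triangles: each of the $7-3=4$ lines avoiding $p$ meets the two lines in distinct points and so completes a triangle. Since every non-monochromatic triangle contains exactly two bichromatic angles, the number of non-monochromatic triangles equals $2B$, where $B$ is twice the number $m$ of points seeing both colors (a point with a $2$--$1$ color split among its three lines carries exactly two bichromatic pairs); hence the monochromatic count is exactly $28-4m$. The lower bound therefore reduces to showing $m\le 6$: every line $2$-coloring leaves some point all three of whose lines share a color. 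Dually this is the classical fact that every blocking set of the Fano plane contains a line: a $3$-element blocking set is itself a line; the complement of a triangle always contains a line (e.g., in the standard labeling with lines $123,145,167,246,257,347,356$, the triangle $\{1,2,4\}$ has complement $\{3,5,6,7\}\supseteq\{3,5,6\}$, a line); and every $5$-set of points contains a line because the maximal arcs in the Fano plane have size $4$. This gives $28-4\cdot 6=4$ with no case analysis at all --- arguably tidier than both your fallback enumeration and the paper's forcing argument --- but without identifying this blocking-set fact your counting plan cannot reach the lower bound.
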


\begin{proof}
We give a coloring that has only 4 monochromatic triangles, three blue and one red in figure 6.

\begin{figure}[ht]
\centering
\includegraphics[height=3cm]{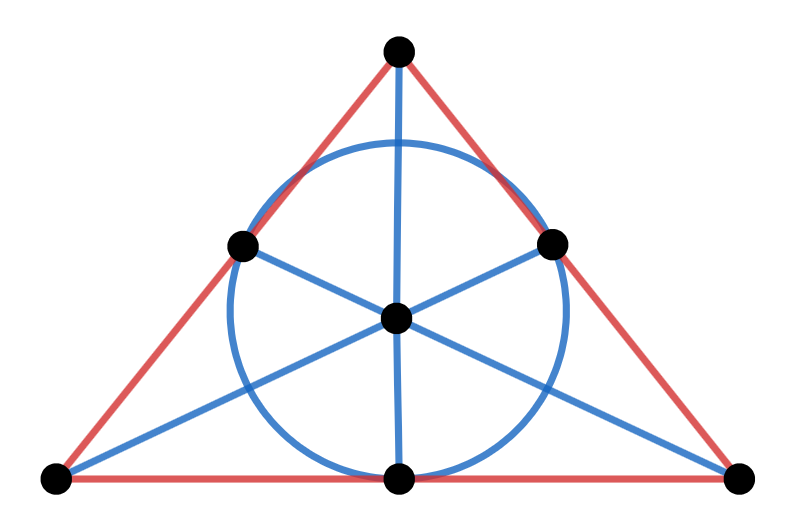}
\label{fig:young1}
\caption{Minimal Fano coloring}
\end{figure}

To see that this is minimal, suppose there are no monochromatic triangles. Without loss of generality color a triangle with 2 red lines, line $125$ and line $567$, and 1 blue line, line $147$, so that no red triangle is created. Color another line blue, line $136$. In order to create no red triangles, color another line blue. Thus, there will be at least 1 blue triangle.

So we may assume that there is at least one monochromatic triangle. Without loss of generality color a red triangle, lines $147$, $567$, and $125$. In order to create no other red triangles, all the other lines have to be colored blue. Thus, given only one monochromatic triangle of a given color, there will be 3 monochromatic triangles of the opposite color, and there are at least 4 monochromatic triangles. 

Finally, assume there are exactly two monochromatic triangles of a given color. Without loss of generality color a red triangle, lines $147$, $567$, and $125$. To create a second red triangle, color any other line red, line $136$. Note that given the symmetry of the Fano configuration, any two such red triangles must be as such. Then necessarily there are then 3 red triangles. 

Given the three red triangles, there are either more red triangles or a blue triangle formed by coloring the remaining lines blue. The result then follows. 

\end{proof}

\begin{cor}
Given a line 2-coloring of the Fano configuration with 4 monochromatic triangles, the minimum number of blue triangles is 1.
\end{cor}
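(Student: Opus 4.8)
The statement splits into an easy half and a hard half, so I would attack it in those two pieces. The easy half is the upper bound: among all colorings with exactly four monochromatic triangles, the number of blue triangles can be as low as $1$. This is immediate from the minimal coloring already exhibited in the proof of Theorem 4.4, which has one red triangle and three blue triangles. Globally interchanging the two colors carries monochromatic triangles to monochromatic triangles and preserves their total count, so the swapped coloring is again a four-triangle coloring, now with three red and one blue. Hence a four-triangle coloring with exactly one blue triangle exists, and the minimum number of blue triangles is at most $1$.

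The content of the corollary is the matching lower bound: that \emph{every} line $2$-coloring with exactly four monochromatic triangles has at least one blue triangle. The natural plan is to enumerate all colorings realizing the minimum of four, up to the color swap and the symmetry of the configuration. I would organize the count through the identity that, if $r$ lines are red and $b=7-r$ are blue, the number of red triangles is $\binom{r}{3}$ minus the number of points all three of whose lines are red (three red lines bound a genuine triangle precisely when they are non-concurrent), with the symmetric formula for blue. Sweeping over the splits $b=0,1,2$ and, by symmetry, $b=5,6,7$ shows the total already exceeds four, so any minimal coloring must have $\{r,b\}=\{3,4\}$; one would then treat that remaining case carefully.

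That last case is exactly where the plan, and in fact the corollary as stated, breaks down, so it is the main obstacle. For four red and three blue lines the total is always four, but how it divides into red and blue triangles depends on the position of the three like-colored lines: if they bound a triangle one gets a $3$--$1$ split, whereas if they are \emph{concurrent} through a common point they span no triangle at all. Concretely, coloring the three lines through a single point blue and the remaining four lines red gives four red triangles and \emph{no} blue triangle, a valid four-triangle coloring with zero blue triangles. The lower bound of $1$ therefore cannot hold, and the true minimum number of blue triangles is $0$ rather than $1$. I would consequently not expect to prove the corollary as written; the natural repairs are either to record the minimum as $0$, or to restrict to colorings in which neither color class is concurrent at a point, under which restriction the forced $3$--$1$ split does guarantee at least one blue triangle.
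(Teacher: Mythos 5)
Your proposal is correct in substance, and it uncovers a genuine error in the paper: the corollary as stated is false, and your counterexample is valid. The paper offers no standalone proof of the corollary; it is meant to fall out of the case analysis in the proof of Theorem 4.4, and that is precisely where the paper goes wrong. In the step ``To create a second red triangle, color any other line red, line $136$. Note that given the symmetry of the Fano configuration, any two such red triangles must be as such,'' the paper overlooks the choice of a fourth red line through none of the vertices of the first red triangle. Concretely, with the paper's labeling (lines $125$, $136$, $147$, $567$, $237$, $246$, $345$), color red the triangle $\{147,567,125\}$ together with line $246$, which avoids the vertices $1,5,7$: all $\binom{4}{3}=4$ red triples are non-concurrent, giving four red triangles, while the three blue lines $136$, $237$, $345$ all pass through point $3$ and hence bound no triangle. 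This is exactly your ``pencil'' coloring (the three lines through a common point in one color, the remaining four in the other), it has exactly four monochromatic triangles, and it has zero blue triangles. So the true minimum is $0$, not $1$, and your proposed repairs (restate the minimum as $0$, or exclude colorings with a monochromatic pencil) are the right ones.

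Your counting framework is also sound and is a genuinely better route than the paper's ad hoc case analysis: since any two Fano lines meet and any three lines are either concurrent or bound a triangle, the number of monochromatic triangles with $r$ red and $b=7-r$ blue lines is $\binom{r}{3}+\binom{b}{3}$ minus the number of monochromatic pencils. Two pencils share at most one line, so four lines contain at most one pencil; your sweep over $b=0,1,2$ (yielding $28$, $16$, and at least $8$ triangles respectively) correctly forces any minimal coloring into the $4$--$3$ split, where exactly one monochromatic pencil always occurs and the total is always $5-1=4$. The split of those four triangles is $3$--$1$ when the three-line class is a triangle and $4$--$0$ when it is a pencil. Note that this computation simultaneously confirms that Theorem 4.4 itself (minimum four monochromatic triangles) is true despite the flawed sub-case in its proof; only the corollary fails.
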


\begin{thm}
The minimum number of monochromatic triangles in a line 2-coloring of the Young configuration is 0.

\end{thm}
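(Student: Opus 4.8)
The quantity in question is a count of monochromatic triangles, hence a nonnegative integer; asserting that its minimum is $0$ is therefore equivalent to producing a single line $2$-coloring of the Young configuration in which no triangle is monochromatic. The whole proof is thus an existence (witness) argument: unlike the Fano case, there is no lower bound to force, only a good coloring to display and verify. The plan is to fix an explicit labeling of the points and lines of the Young configuration, record its triangles, and then exhibit a red--blue assignment of the lines under which every triangle receives both colors.

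First I would reformulate the problem as a hypergraph $2$-colorability (property $B$) question. Recalling the definition of a triangle in a configuration, a triangle is a triple of points that is pairwise collinear but not collinear as a whole; equivalently it is a triple of pairwise-intersecting, non-concurrent lines. Taking the lines of the configuration as the vertices of a hypergraph and the line-triples of the triangles as its hyperedges, a coloring with no monochromatic triangle is exactly a proper $2$-coloring of this hypergraph. To set this up concretely I would enumerate every triangle directly from the incidence data, i.e. list every triple of points that is pairwise collinear without all three lying on a common line, and for each such triple identify the three lines it determines.

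The key structural observation driving the construction is why this is possible here but not for the Fano configuration. In the Fano plane every pair of points is collinear, so every triple of points is either a line or a triangle and the triangle-hypergraph is extremely dense. By contrast, in any symmetric $v_{3}$ configuration each point lies on just $3$ lines and hence is collinear with exactly $6$ other points; as soon as there are more than seven points---as in the Young configuration---non-collinear pairs appear, the collinearity structure becomes sparse, there are comparatively few triangles, and there is ample freedom to separate them. Exploiting the symmetry of the configuration, I would then write down a balanced red--blue assignment of the lines and check, triangle by triangle, that no triangle has all three of its lines in one color class---equivalently, that each color class is itself triangle-free.

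The main obstacle is purely the bookkeeping: correctly enumerating all the triangles from the incidence data and then checking the candidate coloring against every one of them, since a single overlooked triangle would invalidate the claim. I expect to find the coloring by hand, using the automorphisms of the configuration to cut down the number of independent cases, and then to present it (as with the earlier configurations) via a labeled figure together with the short verification that every triangle meets both color classes.
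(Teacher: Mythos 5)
Your proposal is correct and matches the paper's approach exactly: since the count is a nonnegative integer, the paper proves the theorem by exhibiting a single explicit line $2$-coloring of the Young configuration (its Figure 7) with no monochromatic triangle, which is precisely the witness argument you describe. Your hypergraph reformulation and triangle enumeration are just a systematic way of finding and verifying the same kind of coloring the paper displays.
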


\begin{proof}
We give the following coloring in figure 7 for proof.
\begin{figure}[ht]
\centering
\includegraphics[height=4cm]{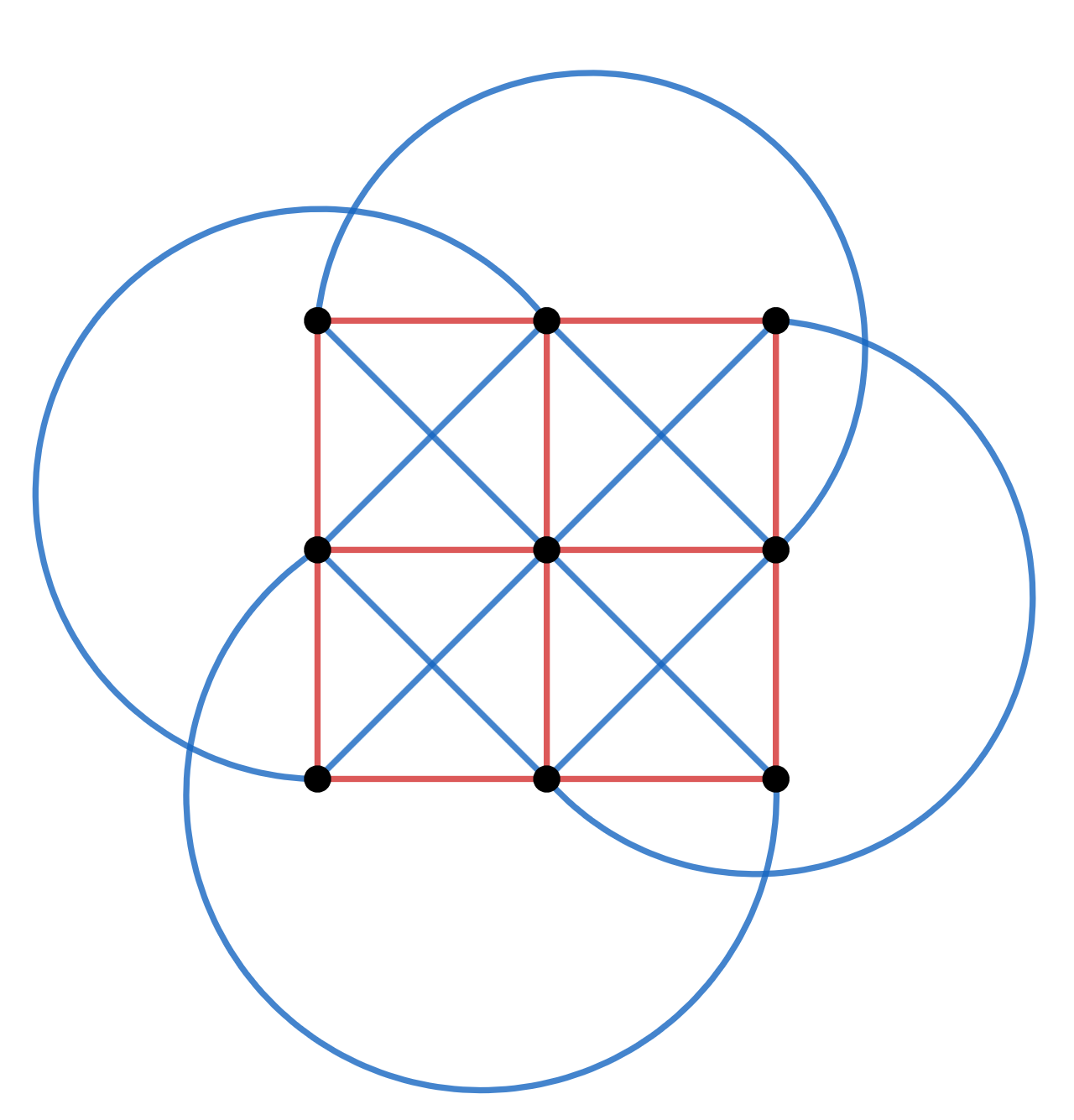}
\caption{Minimally colored Young configuration}
\end{figure}

\end{proof}

\begin{thm}
The minimum number of monochromatic triangles in a line 2-coloring of the Möbius-Kantor configuration is 0.

\end{thm}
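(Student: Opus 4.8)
The plan is to prove the statement constructively: since the number of monochromatic triangles is a nonnegative integer, it suffices to exhibit one line $2$-coloring of the Möbius-Kantor configuration under which no triangle is monochromatic, matching the style of the preceding Young configuration result. First I would fix a concrete combinatorial model of the $8_3$ Möbius-Kantor configuration, taking the point set to be $\mathbb{Z}_8$ and the lines to be $L_i = \{i, i+1, i+3\}$ for $i \in \mathbb{Z}_8$; one checks that each point $p$ lies on exactly the three lines $L_p, L_{p-1}, L_{p-3}$, so this is indeed a symmetric $v_3$ configuration.

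The key preliminary step is to determine the \emph{line-intersection structure}, since by Definition 4.1 a monochromatic triangle requires three lines \emph{of the same color} that pairwise meet in three distinct points, and in particular three same-colored lines that pairwise intersect. Reading off incidences from the model, $L_i$ shares a point with every $L_j$ except $L_{i+4}$ (indeed $L_0 = \{0,1,3\}$ and $L_4 = \{4,5,7\}$ are disjoint), so the intersection graph of the eight lines is $K_8$ with the perfect matching $\{L_i, L_{i+4}\}$ deleted.

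I would then color each line by the parity of its index: red for $L_0, L_2, L_4, L_6$ and blue for $L_1, L_3, L_5, L_7$. Each color class consists of two disjoint (``parallel'') pairs, namely $\{L_0,L_4\}, \{L_2,L_6\}$ and $\{L_1,L_5\}, \{L_3,L_7\}$, so the intersection graph restricted to either class is $K_4$ minus a perfect matching, i.e. a $4$-cycle. A $4$-cycle contains no three pairwise-adjacent vertices, so within each color class there is no triple of pairwise-intersecting lines, and hence \emph{a fortiori} no monochromatic triangle in the sense of Definition 4.1. This forces the minimum to be $0$.

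The main obstacle is not the coloring itself but getting the incidence bookkeeping exactly right: the whole argument rests on the claim that each line is disjoint from precisely one other line and that the two parity classes are each a union of two such disjoint pairs. I would therefore verify the pairwise intersections within each class explicitly (or invoke the cyclic automorphism $i \mapsto i+1$ to reduce the checks by symmetry), taking care that ``parallel'' here means genuinely sharing no point, so that no accidental common point revives a triangle. Once the $4$-cycle structure of each class is confirmed, no further case analysis is needed, since triangle-freeness of the intersection graph on each color is strictly stronger than the absence of monochromatic configuration triangles.
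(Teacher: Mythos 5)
Your proposal is correct, and its high-level strategy coincides with the paper's: since the count of monochromatic triangles is a nonnegative integer, it suffices to exhibit one line $2$-coloring with no monochromatic triangle. Where you genuinely diverge is in the verification. The paper's proof is purely pictorial --- it displays a colored drawing of the configuration (figure 8) and leaves the absence of monochromatic triangles to inspection (backed by the authors' GAP code). You instead work in the standard cyclic model of the unique $8_3$ configuration, with points $\mathbb{Z}_8$ and lines $L_i=\{i,i+1,i+3\}$, and your incidence bookkeeping checks out: the internal differences of the base block are $\pm1,\pm2,\pm3$, each realized exactly once, so $L_i\cap L_j=\emptyset$ precisely when $j-i\equiv 4 \pmod 8$, and the line-intersection graph is $K_8$ minus a perfect matching. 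Your parity coloring then makes each color class induce $K_4$ minus a perfect matching, i.e.\ a $4$-cycle, which has no three pairwise-adjacent vertices; since by Definition 4.1 a triangle requires three pairwise-intersecting lines, no monochromatic triangle can exist. This buys a fully checkable, figure-free proof and in fact establishes something stronger (no color class even contains three mutually intersecting lines, which is the relevant notion in the paper's Section 6), at the modest cost of one additional citation-level fact you should make explicit: that the $8_3$ configuration is unique up to isomorphism, so your cyclic model really is the M\"obius--Kantor configuration. Your reduction of the pairwise-intersection checks via the automorphism $i\mapsto i+1$ is also sound.
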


\begin{proof}
Again we present a minimal coloring in figure 8. 
\begin{figure}[ht]
\centering
\includegraphics[height=3.5cm]{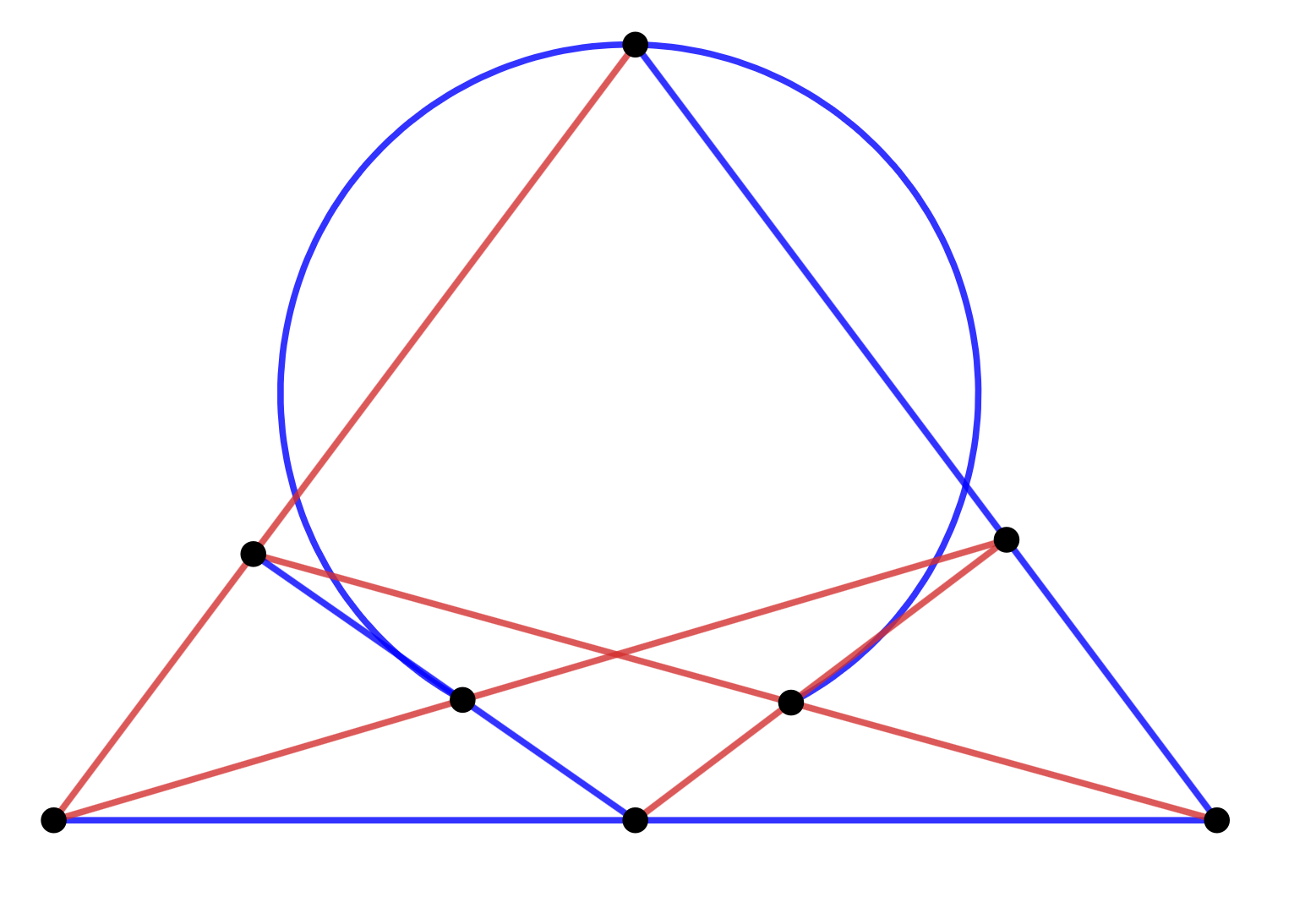}
\caption{Minally colored Möbius-Kantor configuration}
\end{figure}
\end{proof}

\begin{thm}
The minimum number of monochromatic triangles in a line 2-coloring of the Pappus configuration is 0.

\end{thm}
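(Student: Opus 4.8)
The plan is to follow the same strategy that settles the Young and Möbius-Kantor cases: because the minimum number of monochromatic triangles is a non-negative integer, it suffices to exhibit one line $2$-coloring under which no three lines of a common color form a triangle in the sense of Definition 4.1. No lower-bound argument is required here, in contrast to the Fano case, where the minimum was positive.

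First I would fix a concrete combinatorial model of the Pappus $9_3$ configuration. Taking the nine points $A_1,A_2,A_3,B_1,B_2,B_3,P,Q,R$ of Pappus's theorem, the nine lines fall into three \emph{transversals} --- the base lines $\{A_1,A_2,A_3\}$ and $\{B_1,B_2,B_3\}$ and the Pappus line $\{P,Q,R\}$ --- together with six \emph{connectors} $\{A_1,B_2,P\}$, $\{A_2,B_1,P\}$, $\{A_1,B_3,Q\}$, $\{A_3,B_1,Q\}$, $\{A_2,B_3,R\}$, $\{A_3,B_2,R\}$. The coloring I propose assigns one color, say red, to the three transversals and the other, blue, to the six connectors.

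To verify the coloring I would use the observation that three lines of a configuration form a triangle precisely when they pairwise meet in three distinct points of $\mathcal{P}$; in particular a monochromatic triangle requires three lines of one color that are pairwise intersecting, so it suffices to show that no three lines of a common color are pairwise intersecting. The three transversals are pairwise disjoint as subsets of $\mathcal{P}$, so no two of them even meet and there is no red triangle. For the blue lines I would record which pairs of connectors share a point: doing so shows that the intersection graph on the six connectors is the complete bipartite graph $K_{3,3}$, with parts $\{\{A_1,B_2,P\},\{A_3,B_1,Q\},\{A_2,B_3,R\}\}$ and $\{\{A_2,B_1,P\},\{A_1,B_3,Q\},\{A_3,B_2,R\}\}$, each connector meeting exactly the three connectors of the opposite part (through a shared element of $\{P,Q,R\}$, a shared $A_i$, or a shared $B_j$) and none in its own part. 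Since $K_{3,3}$ is bipartite it is triangle-free, so no three connectors are pairwise intersecting and there is no blue triangle.

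The only real work, and the step most prone to slips, is the incidence bookkeeping that identifies the connector intersection graph as $K_{3,3}$: one must check all fifteen pairs of connectors and confirm the bipartition. Once this is done the conclusion is immediate, and a labelled figure in the style of Figures 7 and 8, drawing the three transversals red and the six connectors blue, will make the verification transparent.
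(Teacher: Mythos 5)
Your proposal is correct and takes essentially the same approach as the paper: the paper's entire proof is the exhibition of a single line $2$-coloring with no monochromatic triangles (its Figure 9), which is exactly what you do, presented symbolically rather than pictorially. Your verification --- transversals pairwise disjoint, and the intersection graph of the six connectors being the triangle-free bipartite graph $K_{3,3}$ --- checks out on all fifteen pairs and simply makes rigorous the inspection the paper delegates to its figure.
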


\begin{proof}
We give the following coloring in figure 9 to illustrate the result.

\begin{figure}[ht]
\centering
\includegraphics[height=5cm]{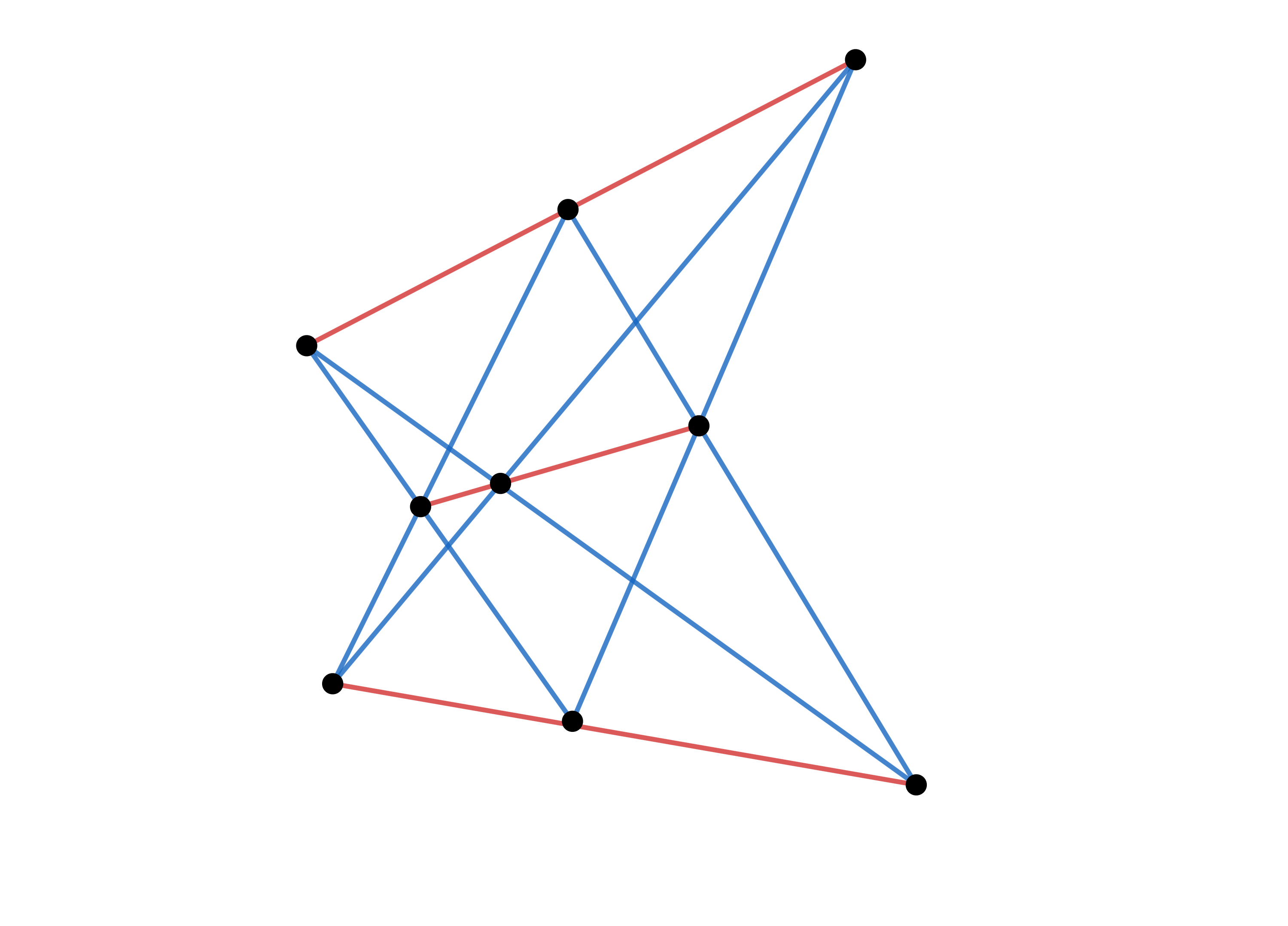}
\caption{Pappus' configuration}
\end{figure}

\end{proof}

\begin{thm}
The minimum number of monochromatic triangles in a line 2-coloring of the Desargues configuration is 0.

\end{thm}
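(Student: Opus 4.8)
The plan is to follow the same strategy as the preceding three theorems: since the number of monochromatic triangles is a nonnegative integer, it suffices to exhibit a single line $2$-coloring of the Desargues configuration that produces no monochromatic triangle at all, and the minimum is then forced to be $0$.

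To make the verification tractable I would first set up a convenient combinatorial model of the configuration. The Desargues configuration can be coordinatized by the $2$-element subsets of $\{1,2,3,4,5\}$: both the ten points and the ten lines are indexed by these $2$-subsets, and a point $\{i,j\}$ is declared incident to a line $\{k,\ell\}$ exactly when $\{i,j\}\cap\{k,\ell\}=\emptyset$. Each $2$-subset is disjoint from exactly three others, so this is a $10_3$ configuration, and its Levi graph is the Desargues graph, confirming the identification. In this language I would next describe all triangles. Two points $\{i,j\}$ and $\{i',j'\}$ are collinear precisely when they meet in one element, and a short case analysis of three pairwise-collinear $2$-subsets (viewed as three pairwise-adjacent edges of $K_5$) shows that they form either a ``graph triangle'' $\{a,b\},\{b,c\},\{a,c\}$, which turns out to consist of the three points on a common line and hence is \emph{not} a triangle in the sense of Definition 4.1, or else a ``star'' $\{a,b\},\{a,c\},\{a,d\}$ sharing a common symbol, which is a genuine triangle. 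Tracking the three joining lines of such a star shows that they themselves all contain the single symbol missing from the four used by the points; thus monochromatic triangles correspond exactly to three lines sharing a common symbol that receive the same color.

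With this dictionary the coloring problem becomes transparent: regarding the ten lines as the edges of $K_5$, a monochromatic triangle is precisely a vertex of $K_5$ incident to three same-colored edges. Since every vertex has degree four, I can eliminate all triangles simultaneously by demanding that each vertex meet exactly two red and two blue edges, which forces each color class to be a $2$-regular graph on five vertices, i.e.\ a $5$-cycle. I would therefore take the standard decomposition of $K_5$ into two edge-disjoint Hamiltonian cycles, color one cycle red and the other blue, and this is the coloring I would draw in the figure.

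The main obstacle is not the construction but the bookkeeping behind the claim that no monochromatic triangle survives: one must be sure that the twenty configuration triangles have been correctly and exhaustively identified, and that ``triangle'' in the sense of Definition 4.1 genuinely coincides with the star condition rather than with some other pattern of three mutually intersecting lines. Once the reduction ``triangle $=$ three same-colored lines meeting at a vertex of $K_5$'' is justified, the two-cycle coloring finishes the proof immediately, since no degree-four vertex split $2$--$2$ can carry three edges of a single color.
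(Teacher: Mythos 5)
Your proof is correct, but it takes a genuinely different route from the paper. The paper's proof of this theorem (like those for the Young, M\"obius--Kantor, and Pappus configurations) consists entirely of exhibiting an explicit coloring in a figure, with the verification left to inspection and to the authors' GAP code; no structural analysis is given. You instead coordinatize the configuration by the $2$-subsets of $\{1,\dots,5\}$ with incidence given by disjointness (correctly identified via the Levi graph, which is the Desargues graph, the bipartite double cover of the Petersen graph), classify the triangles in the sense of Definition 4.1 as exactly the ``stars'' $\{a,b\},\{a,c\},\{a,d\}$ --- your dictionary checks out: the three joining lines of a star are $\{b,e\},\{c,e\},\{d,e\}$, sharing the missing symbol $e$, the ``graph triangle'' case degenerates to three collinear points on the line $\{d,e\}$ and so is excluded by the definition, and conversely any three of the four lines through a symbol yield a triangle, giving the correct count $5\binom{4}{3}=20$. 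The reduction to $2$-coloring the edges of $K_5$ so that no vertex carries three same-colored edges, solved by the decomposition of $K_5$ into two Hamiltonian $5$-cycles, then not only produces a valid coloring but proves something stronger: it characterizes \emph{all} triangle-free line $2$-colorings of the Desargues configuration as precisely those whose color classes are the two $5$-cycles of such a decomposition, since a $2$--$2$ split at every vertex forces each color class to be $2$-regular and hence a single $5$-cycle. It is worth noting that this is the same two-$5$-cycle structure appearing in the paper's Lemma 3.2, though invoked there to forbid monochromatic triangles in $K_5$ rather than monochromatic $K_{1,3}$ stars as here. What the paper's approach buys is brevity and uniformity across the four zero-minimum configurations; what yours buys is a machine-checkable combinatorial certificate, a complete census of the triangles, and a classification of the extremal colorings rather than a single example.
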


\begin{proof}
We give here in figure 10 the coloring that achieves zero monochromatic colorings. Note how the configuration is represented as in Desargues' theorem. See \cite{dembowski1997finite} for more details.

\begin{figure}[ht]
\centering
\includegraphics[height=4cm]{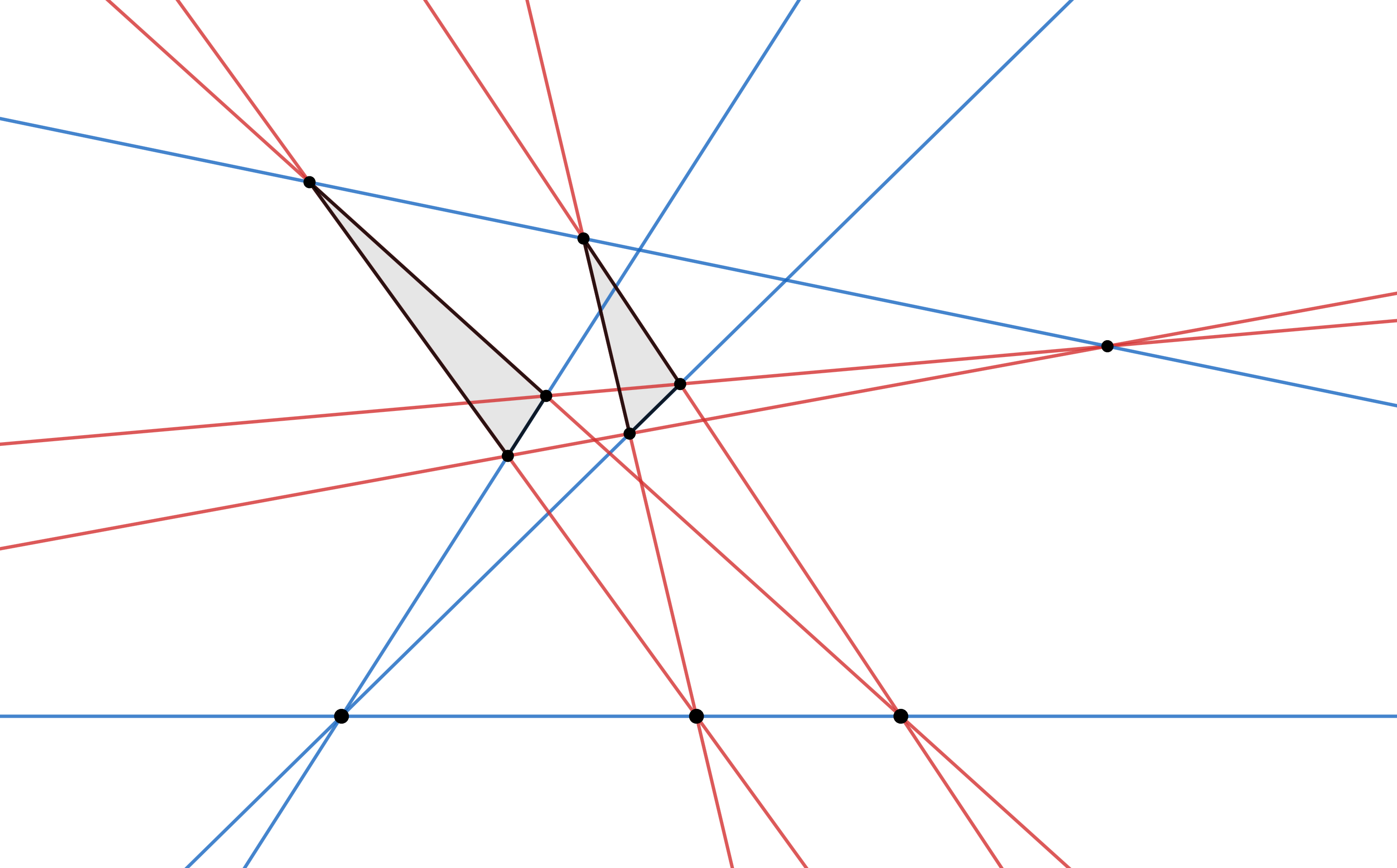}
\caption{Desargues' configuration}
\end{figure}

\end{proof}

The code written in the GAP coding language in the github repository at \cite{GAPcode} can both verify the above results and compute the minimum number of monochromatic triangles given a line 2-coloring of a configuration.

We make the remark here that if a configuration is triangle-free, then trivially the minimum number of monochromatic triangles will of course be zero.

\section{Incidence switches}

In \cite{bras2012semigroup} and \cite{pisanski2012configurations}, the concept of the addition of configurations or an incidence switch is given. We define this here as a connected sum (using the terminology of algebraic topology) in order to provide a constructive result for computing the minimum number of monochromatic triangles.

\begin{defn}
Given two $(n_{s},m_{k})$-configurations we define a connected sum of the two configurations to be a $(2n_{s},2m_{k})$-configuration constructed as follows: Take points $p_{1}$ and $p_{2}$ from each configuration and lines $l_{1}$ and $l_{2}$ that each are on. Then replace $p_{1}$ with $p_{2}$ in $l_{1}$ and similarly $p_{2}$ with $p_{1}$ in $l_{2}$. 
\end{defn}

Visually, we can see this connected sum in figure 11 as:

\begin{figure}[ht]
\centering
\includegraphics[height=3cm]{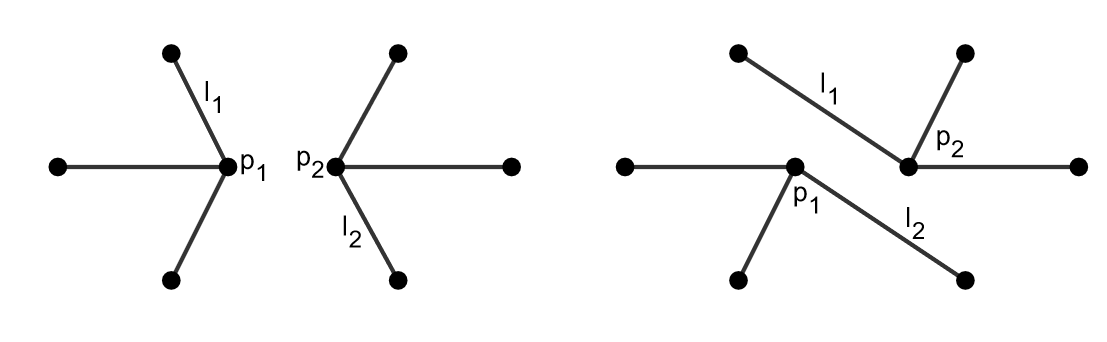}
\caption{Connected sum}
\end{figure}

It is clear that this connected sum is still a configuration and indeed a $(2n_{s},2m_{k})$-configuration. 

Recall that a configuration is \textit{flag-transitive} if for any two incidences of points and lines (flags) there exists a line-preserving bijection of the point set that sends the first incidence to the second. It then follows that these connected sums are not necessarily unique if the configurations are not flag-transitive - dependent on the choices of points and line.

We now give the following proposition:

\begin{prop}
Given a connected sum of two $(n_{s},m_{k})$-configurations, the point set can be partitioned into points from the first configuration and points from the second. Given this partition there is no triangle that has points from both partitioning sets. 
\end{prop}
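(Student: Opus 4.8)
The plan is to track exactly which lines of the connected sum can meet both parts of the partition, and then to exploit a parity obstruction. Write $\mathcal{P}_1$ and $\mathcal{P}_2$ for the point sets of the first and second configuration; these are disjoint and together form the partition in the statement. Every line other than $l_1$ and $l_2$ is left untouched by the switch, so each such line lies entirely inside $\mathcal{P}_1$ or entirely inside $\mathcal{P}_2$. The first step is therefore to record that the only lines meeting both parts are $l_1$ and $l_2$, and to note precisely how: after the switch $l_1$ consists of its original $\mathcal{P}_1$-points with $p_1$ deleted, together with the single point $p_2\in\mathcal{P}_2$, while $l_2$ consists of its original $\mathcal{P}_2$-points with $p_2$ deleted, together with the single point $p_1\in\mathcal{P}_1$. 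I would call $l_1$ and $l_2$ the cross-lines.

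Next I would reduce the claim to a statement about these two cross-lines. Suppose for contradiction that some triangle has its three points split across the partition. A triangle is an odd cycle, and the bipartition into $\mathcal{P}_1$ and $\mathcal{P}_2$ admits no odd cycle, so the three points cannot alternate sides; hence the split is $2$-$1$, say two points in one part and a single point $q$ in the other. The two triangle-edges incident to $q$ each join the two parts, so each is a cross-line; as there are only two cross-lines in all, these two edges are exactly $l_1$ and $l_2$, and being the two edges meeting at $q$ they share the vertex $q$.

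The crux --- and the step I expect to carry the whole argument --- is to show that $l_1$ and $l_2$ share no point whatsoever, so that no such common vertex $q$ can exist. Here the switch is decisive: $l_1$ gained $p_2$ but lost $p_1$, and $l_2$ gained $p_1$ but lost $p_2$. Writing $l_1=\{p_2\}\cup A$ with $A\subseteq\mathcal{P}_1\setminus\{p_1\}$ and $l_2=\{p_1\}\cup B$ with $B\subseteq\mathcal{P}_2\setminus\{p_2\}$, disjointness of $\mathcal{P}_1$ and $\mathcal{P}_2$ leaves only $p_1$ and $p_2$ as conceivable common points; but $p_1\notin l_1$ and $p_2\notin l_2$ by construction, so $l_1\cap l_2=\emptyset$. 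This contradicts the shared vertex $q$, and the proposition follows.

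I expect the only real subtlety to be the bookkeeping of the (at most four) ways the two cross-edges at $q$ could be matched to $l_1$ and $l_2$; the disjointness of $l_1$ and $l_2$ dispatches all of them at once, which is why I would isolate that disjointness as a short preliminary observation before running the triangle argument. One should also verify at the outset the easy facts that the construction really does yield a configuration and that the cross-lines each carry exactly one point of the opposite part, since both are used silently above.
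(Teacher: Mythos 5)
Your proof is correct, and it is actually more complete than the paper's. The paper disposes of the proposition in two sentences: the connected sum has only two lines containing points from both original configurations, and ``three lines would be required to form a triangle.'' But that line count alone does not close the argument: as you observe, a straddling triangle splits its vertices $2$--$1$, and such a triangle needs only \emph{two} of its lines to cross the partition, so the existence of exactly two cross-lines is not by itself a contradiction. What rules it out is precisely the step you isolate as the crux: after the switch, $l_{1}$ has lost $p_{1}$ and gained only $p_{2}$, while $l_{2}$ has lost $p_{2}$ and gained only $p_{1}$, so $l_{1}\cap l_{2}=\emptyset$; meanwhile the two crossing lines of a $2$--$1$ triangle would have to meet at the lone vertex $q$. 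So you share the paper's opening observation (only $l_{1}$ and $l_{2}$ meet both parts) but supply the disjointness argument that the paper's proof elides --- and which, on a literal reading of ``three lines would be required,'' it arguably needs. Two minor remarks: the appeal to odd cycles and bipartiteness is heavier machinery than necessary, since three points in two parts split $3$--$0$ or $2$--$1$ by pigeonhole alone; and you should note that the two triangle-lines through $q$ are distinct because the three points of a triangle are not collinear, which is what lets you conclude they are exactly $l_{1}$ and $l_{2}$ rather than one line counted twice.
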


\begin{proof}
This follows as the connected sum has only two lines that contain points from both the original configurations. Three lines would be required to form a triangle.
\end{proof}

This then immediately leads to the following corollaries:

\begin{cor}
Given a connected sum of two $(n_{s},m_{k})$-configurations the number of triangles is less than or equal to the sum of the number of triangles of the two configurations.
\end{cor}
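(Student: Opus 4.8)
The plan is to exhibit an injection from the triangles of the connected sum into the disjoint union of the triangle sets of the two summands, which immediately yields the desired inequality on cardinalities. By the preceding proposition, every triangle of the connected sum has all three of its points in a single partition class---either all in the first configuration or all in the second---so it suffices to show that each triangle lying in a given class is already a triangle of the corresponding original configuration.

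First I would isolate exactly how the incidence switch alters the incidence structure when restricted to one class, say the class $\mathcal{P}_1$ of points coming from the first configuration. The construction deletes $p_1$ from $l_1$ and inserts $p_2$, and symmetrically deletes $p_2$ from $l_2$ while inserting $p_1$. Restricted to $\mathcal{P}_1$, the only relevant changes are that the incidence of $p_1$ with $l_1$ is removed and that $p_1$ is placed on the re-routed line $l_2$; but $l_2$ meets $\mathcal{P}_1$ only in $p_1$, and the inserted point $p_2$ lies in the opposite class, so neither modification creates any new collinearity among points of $\mathcal{P}_1$. Hence the collinearity relation on $\mathcal{P}_1$ in the connected sum is precisely the collinearity relation of the first configuration with the pairs $\{p_1,r\}$ removed, where $r$ ranges over the remaining points of $l_1$.

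The key point is then that a triple of points is a triangle exactly when it is pairwise collinear but not collinear as a whole (this is Definition 4.1 rephrased), so deleting collinear pairs can only destroy triangles, never create them. I would run a short case analysis on whether $p_1$ is among the three points and whether the other two lie on $l_1$, confirming that every triple forming a triangle in the connected sum restricted to $\mathcal{P}_1$ still forms a triangle in the first configuration, while the triples that are lost are precisely certain triangles through $p_1$ along $l_1$. The symmetric statement holds for the second class, so summing over the two classes and invoking the preceding proposition shows that the number of triangles of the connected sum is at most the sum of the numbers of triangles of the two configurations.

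The main obstacle I anticipate is the careful bookkeeping of the ``third point is not on the line'' clauses in Definition 4.1: I must verify not only that the switch adds no new collinear pair but also that it cannot upgrade a degenerate (wholly collinear) triple into a genuine triangle, and that re-routing $p_1$ onto $l_2$ produces no triangle spanning the two classes. This last point is exactly what the preceding proposition guarantees, so the argument should be organized to lean on that proposition rather than re-deriving it.
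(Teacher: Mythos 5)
Your proposal is correct and takes essentially the same route as the paper, which states this corollary without further proof as an immediate consequence of Proposition 5.2: no triangle of the connected sum spans both partition classes, and within each class the triangles are (at most) those of the corresponding summand. Your careful verification that collinearity restricted to one class only loses the pairs $\{p_{1},r\}$ with $r$ on $l_{1}$, together with the check that no wholly collinear triple gets upgraded to a triangle, simply makes explicit the within-class injection that the paper treats as obvious.
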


\begin{cor}
Given a connected sum of two $(n_{s},m_{k})$-configurations and a line 2-coloring, the minimum number of monochromatic triangles is less than or equal to the sum of the minimum number of monochromatic triangles given line 2-colorings of the two configurations.
\end{cor}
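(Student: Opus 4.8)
The plan is to prove the inequality by exhibiting one specific line $2$-coloring of the connected sum whose monochromatic-triangle count meets the claimed bound; since the minimum over all colorings is at most the count of any single coloring, this suffices. I would begin by fixing, for each summand $C_{1}$ and $C_{2}$, a line $2$-coloring $\chi_{1}$, $\chi_{2}$ that attains its minimum number of monochromatic triangles, call these $t_{1}$ and $t_{2}$. The crucial structural observation is that the connected sum keeps every line of $C_{1}$ and of $C_{2}$ intact as a line --- only the incidences of the two switched lines $l_{1}$ and $l_{2}$ are altered, not the lines themselves --- so there is a canonical bijection between the lines of the connected sum and the disjoint union of the lines of $C_{1}$ and $C_{2}$. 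I would transport $\chi_{1}$ and $\chi_{2}$ along this bijection to obtain a single coloring $\chi$ of the connected sum, and then bound the number of monochromatic triangles of $\chi$.

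By Proposition 5.1, every triangle of the connected sum lies entirely inside the point set of $C_{1}$ or entirely inside that of $C_{2}$, so I would bound each part separately. Concretely, I would show that each monochromatic triangle of the connected sum whose points all come from $C_{1}$ is, via the line bijection, also a monochromatic triangle of $C_{1}$ under $\chi_{1}$, built from the very same three lines; this produces an injection, hence at most $t_{1}$ such triangles, and symmetrically at most $t_{2}$ on the $C_{2}$ side. Summing the two bounds gives at most $t_{1}+t_{2}$ monochromatic triangles in $\chi$, and therefore the minimum for the connected sum is at most $t_{1}+t_{2}$, which is the assertion. This is essentially the color-aware refinement of the counting already behind Corollary 5.2.

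The heart of the argument, and the step I expect to require the most care, is the claim that a same-side triangle of the connected sum is genuinely a triangle of the corresponding summand using identical lines. Since a triangle is determined by its three lines (as remarked after Definition 4.1), I would first note that any line carrying two points of $C_{1}$ must itself be a line of $C_{1}$, because among the lines of $C_{2}$ only $l_{2}$ meets the points of $C_{1}$, and it does so in the single point $p_{1}$; thus a same-side triangle uses no line of the other configuration. What then must be checked is that the three pairwise intersection points and the non-concurrency survive the switch. Here the key is that the incidence switch only deletes the point $p_{1}$ from $l_{1}$ (and dually $p_{2}$ from $l_{2}$): among the lines of $C_{1}$ this can only destroy an intersection at $p_{1}$, never create a new common point, since the only point added to $l_{1}$ is $p_{2}$, which lies outside $C_{1}$. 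Consequently every pairwise intersection present in the connected sum was already present, at the same point, in $C_{1}$, so the line-triple already formed a (non-concurrent) triangle in $C_{1}$. I would finish by observing that colors are preserved under the line bijection, which is immediate, closing the injection and hence the proof.
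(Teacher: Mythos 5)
Your proof is correct and is essentially the argument the paper intends: the paper states this corollary as an immediate consequence of its proposition that no triangle of the connected sum crosses the partition, and your transported coloring $\chi$ along the canonical line bijection, together with the injection of same-side monochromatic triangles into triangles of the corresponding summand, is exactly that argument made explicit. The one sub-step you gloss over --- ruling out a triangle of the connected sum whose third vertex is $p_{1}$ itself with the switched line $l_{1}$ opposite, which would pull back to three collinear points of $C_{1}$ --- follows immediately from the uniqueness axiom you already invoked, since the other two lines of such a triangle would each contain $p_{1}$ and a point of the original $l_{1}$, forcing them to coincide with $l_{1}$, which is unavailable in the connected sum.
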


\section{Maximal sets of mutually intersecting lines}

We here make the observation that the minimum number of monochromatic triangles was only non-zero for the Fano plane (for the configurations that we dealt with in section 4). We also note that the Fano plane consists of seven mutually intersecting lines, whereas for example Pappus's configuration contains maximally three intersecting lines and the Mobius-Kantor four. We take this observation to state formally the result:

\begin{thm}
Suppose a symmetric $v_{3}$ configuration contains a maximal set of mutually intersecting lines of size $i$. Then $i\geq6$ implies that the minimum number of a monochromatic $2$-colorings is non-zero. Furthermore, given such a subset of six mutually intersecting lines, the minimum number of monochromatic triangles given a line $2$-coloring of the set is two.
\end{thm}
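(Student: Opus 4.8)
The plan is to first pin down the combinatorial shape that any six mutually intersecting lines must take inside a symmetric $v_{3}$ configuration, and then to reduce the colouring question to an edge $2$-colouring of a small complete graph, where the background of Proposition 3.5 and Goodman's formula (Theorem 3.6) can be brought to bear.

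First I would prove a structural lemma: any six pairwise intersecting lines of a symmetric $v_{3}$ configuration meet in exactly four \emph{triple points} (points lying on three of the six lines) and three \emph{double points} (points lying on exactly two). The input is a line-by-line count. A fixed line $\ell$ among the six carries only three points, yet it must meet each of the other five lines; since each point of the configuration lies on at most three lines, at most two of the other five can pass through any one point of $\ell$, so the five meetings split as $(2,2,1)$. Hence every line passes through exactly two triple points and one double point, and summing these incidences over all six lines forces exactly four triple points and three double points. Identifying each line with the pair of triple points it joins then realises the six lines as the six edges of $K_{4}$ --- equivalently, as the six lines of a complete quadrangle on the four triple points, whose three diagonal points are the double points.

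Next I would translate the colouring problem across this identification. By Definition 4.3, three of our lines form a triangle exactly when they are not concurrent, i.e.\ when the corresponding three edges of $K_{4}$ do not form a star at a single vertex; such a non-star triple is either a $3$-cycle or a spanning path $P_{4}$ of $K_{4}$. A line $2$-colouring thus becomes an edge $2$-colouring of $K_{4}$, and a monochromatic triangle becomes a monochromatic non-star triple. For the upper bound I would exhibit the balanced colouring in which each colour class is a spanning path --- for instance the decomposition of $K_{4}$ into the Hamiltonian paths $\{12,23,34\}$ and $\{13,14,24\}$ --- which yields exactly one monochromatic triangle per class and hence the claimed total of two. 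The first assertion ($i\geq 6$ forces a non-zero minimum) then follows at once, since the restriction of any colouring of the whole configuration to such a six-line set already contains a monochromatic triangle, so the minimum over the configuration is positive.

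The main obstacle is the lower bound in the balanced $3$--$3$ split, and it is precisely here that the concurrent triples (the triple points, i.e.\ the stars of $K_{4}$) must be handled with care. A colour class of three edges fails to contribute a triangle exactly when those three lines are concurrent through a common triple point, and this is the one degenerate configuration that threatens to drop the count below two. Controlling these concurrent monochromatic classes is the crux of the argument, and it is the configuration analogue of the non-disjoint Ramsey threshold computed in Proposition 3.5 (and of Goodman's count, which also returns the value two at the critical size); I would organise the case analysis on the sizes of the two colour classes around exactly this point, since a class of four or more edges already forces at least two non-star triples and so reduces everything to settling the balanced, potentially-concurrent case.
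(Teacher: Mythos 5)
Your structural lemma and the $K_{4}$ translation are correct, and in fact more careful than the paper, which asserts the complete-quadrilateral shape of six mutually intersecting lines only by appeal to a figure; your $(2,2,1)$ count per line, the tally of four triple points and three double points, and the dictionary ``monochromatic triangle $=$ monochromatic non-star edge-triple of $K_{4}$'' (non-star meaning a $3$-cycle or a spanning path) are all sound. The genuine gap is in your final paragraph: you correctly isolate the balanced split with a concurrent colour class as ``the crux,'' but you defer it rather than settle it --- and it cannot be settled in favour of the theorem, because it is an outright counterexample to the claimed lower bound of two. Colour the three lines through one triple point (a star $K_{1,3}$ in your model; in the paper's labelling the lines $abc$, $cde$, $fgc$ through $c$) red, and the complementary three lines (necessarily the $3$-cycle on the other three triple points: $aef$, $bge$, $agd$) blue. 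The red class is concurrent and so contributes no triangle under Definition 4.3, while the blue class contributes exactly one (the triangle $\{a,e,g\}$). Hence the minimum over line $2$-colourings of the six-line set is one, not two, and your analogy with Proposition 3.5 and Goodman's count is misleading here: concurrent triples have no counterpart in the complete-graph setting, and they are precisely what breaks the value two.

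You should know that the paper's own proof stumbles at exactly this point: it claims that once $abc,cde$ are blue and $aef,bge$ are red, ``$fgc$ and $agd$ both form monochromatic triangles whether red or blue,'' but colouring $fgc$ blue produces the concurrent blue triple through $c$, which is not a triangle, and the resulting star-plus-triangle colouring has a single monochromatic triangle. What does survive --- and your reduction proves it cleanly --- is the first assertion of the theorem: the edge set of $K_{4}$ cannot be partitioned into two stars (the edge joining the two would-be centers lies in both), so in every $2$-colouring some colour class contains a non-star triple, and $i\geq 6$ forces a non-zero minimum. Running your case analysis on class sizes to completion gives: $(6,0)$ yields $16$ monochromatic triangles, $(5,1)$ yields $8$, $(4,2)$ yields at least $3$, and $(3,3)$ yields $2$ for the path-path split but $1$ for the star-triangle split; so the correct value for the six-line set is one, the bound in Corollary 6.2 should weaken from $2k$ to $k$, and the value two is the minimum only among colourings in which neither colour class is concurrent.
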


We note here that the maximum size of a set of mutually intersecting lines is seven (given a line, each of the three points on the line can be on two more lines) and this refers exactly to the Fano configuration where we have seen the minimum number of monochromatic triangles given a line $2$-coloring is four.

\begin{proof}
Take a set of six mutually intersecting lines. It is clear that the only possible arrangement of six such lines is as in figure 12

\begin{figure}[ht]
\centering
\includegraphics[height=3cm]{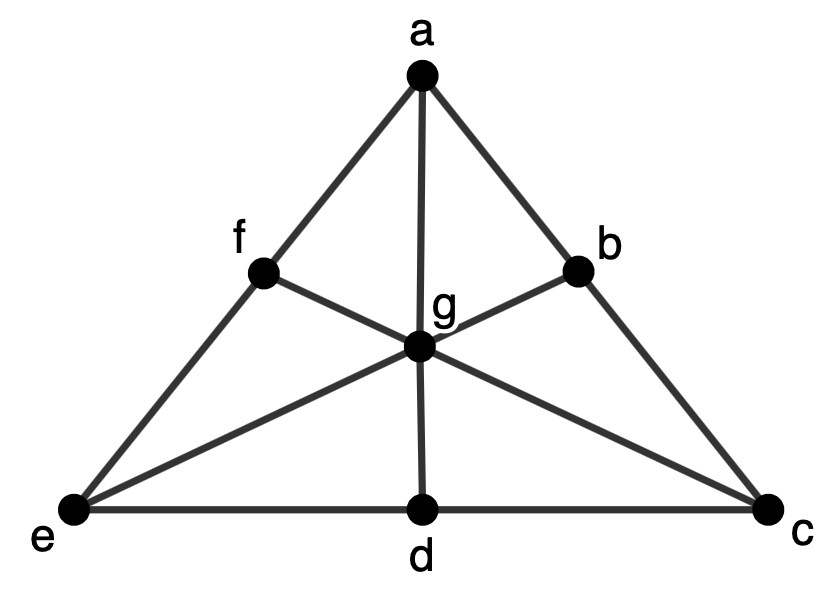}
\caption{Six mutually intersecting lines}
\end{figure}

It then follows that without loss of generality if lines $abc$ and $cde$ are blue and $aef$ is red, then $bge$ must be red, but in that case $fgc$ and $agd$ both form monochromatic triangles whether red or blue. Minimally two are formed if $fgc$ and $agd$ are different colors as seen in figure 13.

\begin{figure}[ht]
\centering
\includegraphics[height=3cm]{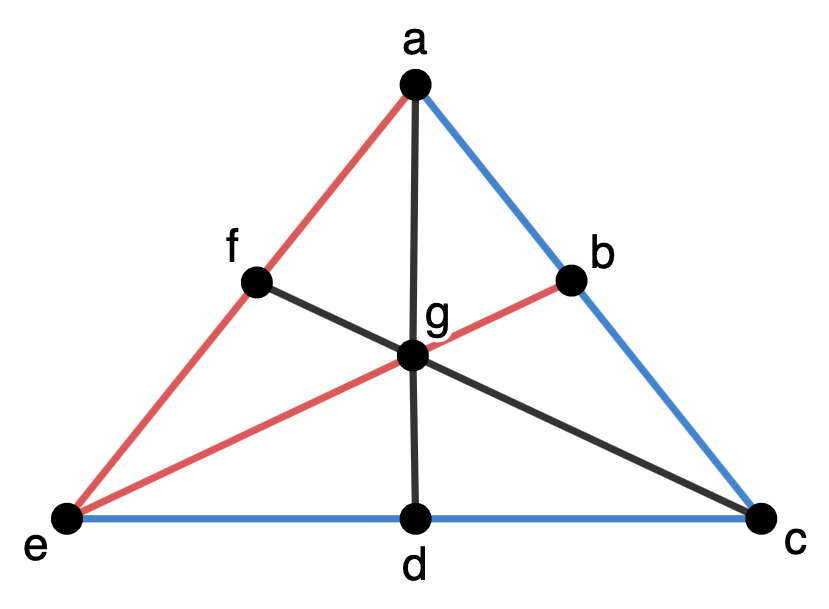}
\caption{Six mutually intersecting lines with necessary coloring}
\end{figure}

This yields the required two monochromatic triangles.
\end{proof}

The immediate corollary is the following.

\begin{cor}
If a symmetric $v_{3}$ configuration has $k$ disjoint sets of $6$ mutually intersecting lines then the minimum number of monochromatic triangles given a line $2$-coloring will be greater than or equal to $2k$.
\end{cor}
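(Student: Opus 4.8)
The plan is to reduce the global count to $k$ independent local counts, one for each set of six mutually intersecting lines, and then argue that the monochromatic triangles these produce are all distinct. Throughout I take ``disjoint'' to mean that the sets are disjoint as sets of lines. First I would fix an arbitrary line $2$-coloring of the whole configuration and let $S_{1},\dots,S_{k}$ denote the $k$ pairwise disjoint six-line sets of mutually intersecting lines. Restricting the given coloring to each $S_{i}$ yields a line $2$-coloring of that six-line set, so the ``furthermore'' part of Theorem 5.3 applies directly and guarantees at least two monochromatic triangles whose three defining lines all lie in $S_{i}$.

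The next step is to promote these local triangles to genuine monochromatic triangles of the full configuration. Each triple of points produced from $S_{i}$ satisfies the incidence and non-incidence conditions of Definition 4.1, and these conditions are intrinsic to the configuration (any two points lie on at most one line), so they are unaffected by which lines we choose to color or ignore. Moreover a triangle that is monochromatic for the restricted coloring of $S_{i}$ remains monochromatic for the original coloring, since the colors of its three lines are unchanged. This produces at least $2k$ monochromatic triangles in total, tagged by the set $S_{i}$ from which each arises.

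The only point that needs care — and the one I expect to be the main obstacle — is showing that no triangle is double-counted, i.e.\ that the two triangles produced from $S_{i}$ are distinct from the two produced from $S_{j}$ whenever $i\neq j$. Here I would invoke the observation recorded just after Definition 4.1 that a triangle is uniquely determined by its three lines. Every triangle arising from $S_{i}$ has all three of its lines in $S_{i}$; since $S_{i}\cap S_{j}=\emptyset$, such a triangle cannot have all three lines in $S_{j}$ and hence cannot coincide with any triangle arising from $S_{j}$. The two triangles coming from a single $S_{i}$ are already distinct because Theorem 5.3 asserts a minimum of two. Consequently the $2k$ triangles are pairwise distinct, and the minimum number of monochromatic triangles is at least $2k$, as claimed.
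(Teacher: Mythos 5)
Your proposal is correct and matches the paper's intent exactly: the paper presents this as an ``immediate'' corollary of the six-line theorem with no written proof, and your argument (apply the theorem's ``furthermore'' clause to the restriction of the coloring to each of the $k$ disjoint line sets, then use the fact that a triangle is uniquely determined by its three lines to rule out double-counting across disjoint sets) is precisely the intended reasoning, carefully filled in. One cosmetic note: the six-line theorem is numbered 6.1 in the compiled paper (the acknowledgement's ``Theorem 5.3'' reflects an earlier draft), so your citation label should be updated accordingly.
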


We then see that the connected sum of configurations gives an excellent example of this corollary:

\begin{exmp}
Take a connected sum of two Fano configurations. The Fano configuration is flag-transitive so this is unique. It is then clear that there are two disjoint sets of six mutually intersecting lines - one each from the original configurations. By the corollary, the minimum number of monochromatic triangles given a line $2$-coloring should be greater than or equal to $4$. Our GAP code confirms this and establishes that it is exactly $4$.
\end{exmp}

We here also give a couple of particular examples of configurations with maximally 5 mutually intersecting lines. The first has minimally no monochromatic triangles given a line $2$-coloring and the second has minimally one monochromatic triangle given a line $2$-coloring.

\begin{exmp}
\begin{verbatim}
    
{{1,2,3},{3,4,5},{1,5,6},{3,6,7},{2,5,7},{1,8,9},{6,9,10},{2,11,13},{7,12,13},{4,8,10},
{4,11,12},{9,13,14},{8,11,14},{10,12,14}}
\end{verbatim}

\end{exmp}

\begin{exmp}
\begin{verbatim}

{{1,2,3},{1,4,5},{3,5,6},{1,6,7},{3,4,7},{4,6,8},{1,10,15},{2,13,14},{5,9,11},{7,12,16},
{8,9,10},{11,12,13},{14,15,16},{8,11,14},{9,12,15},{10,13,16}}
\end{verbatim}
\end{exmp}

This second example makes it clear that the converse of Theorem 6.1 is not true. We conjecture that the following partial converse is true:

\begin{conj}
Suppose a symmetric $v_{3}$ configuration contains a maximal set of mutually intersecting lines of size $i$. If minimum number of monochromatic triangles in a line $2$-colorings is non-zero, then $i>4$.
\end{conj}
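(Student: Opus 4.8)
The plan is to prove the contrapositive: if a symmetric $v_3$ configuration has no five mutually intersecting lines (that is, $i\le 4$), then there is a line $2$-coloring with no monochromatic triangle, so the minimum number is zero. Since the three lines through any point are already mutually intersecting, every $v_3$ configuration satisfies $i\ge 3$, so only the cases $i=3$ and $i=4$ require treatment.

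I would first reformulate the problem through two associated objects. Let $G$ be the intersection graph of the lines (one vertex per line, an edge whenever two lines share a point). A short count shows $G$ is $6$-regular: each line has three points, each point lies on two further lines, and these six lines are distinct because two lines meet in at most one point. The mutually intersecting sets of lines are exactly the cliques of $G$, so $i=\omega(G)$ and the hypothesis becomes that $G$ is $K_5$-free. A configuration triangle is precisely a triple of lines that forms a clique of $G$ but is not concurrent. Letting $H$ be the $3$-uniform hypergraph whose vertices are the lines and whose edges are the configuration triangles, a line $2$-coloring avoids all monochromatic triangles if and only if it is a proper $2$-coloring of $H$ in the sense of property~B. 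The goal is thus: if $\omega(G)\le 4$, then $H$ has property~B.

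The engine of the argument is the classical fact that the minimum number of edges in a non-$2$-colorable $3$-uniform hypergraph is seven, attained by the Fano plane. Hence if $H$ failed to have property~B it would contain an edge-minimal non-$2$-colorable sub-hypergraph $H_0$ with at least seven configuration triangles, entangled so tightly that no $2$-coloring separates them. I would then analyse $H_0$ through the local structure of $G$: the six neighbours of a line $\ell$ split into three pencil pairs (the two further lines through each point of $\ell$), and a configuration triangle through $\ell$ is exactly a cross edge joining two different pencil pairs in this link. Because $\omega(G)\le 4$, each link is $K_4$-free on six vertices, which bounds how many triangles of $H_0$ can pass through a common line and forces the seven triangles to be supported on a small set of lines with a dense intersection pattern. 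The target is to show that this density cannot be realised without producing five mutually intersecting lines, contradicting $\omega(G)\le 4$; note that the extremal case, where $H_0$ is the Fano plane itself, corresponds to the Fano configuration and genuinely carries seven mutually intersecting lines, consistent with Theorem 6.1.

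The main obstacle is precisely this last step: converting the abstract non-$2$-colorable sub-hypergraph $H_0$ into a large clique of $G$. Property~B can fail purely because of how the triangle-edges overlap as sets of lines, and a priori such overlap need not force the underlying lines to pairwise intersect; one must use the configuration axioms (three points per line, three lines per point, two lines meeting at most once) to turn ``seven triangles with no proper $2$-coloring'' into ``five lines that pairwise meet.'' I would attempt this by classifying the edge-minimal non-$2$-colorable hypergraphs that can actually arise as configuration triangles, aiming to show they must be Fano-like and hence concentrated on seven lines, and then verifying that such concentration is incompatible with $i\le 4$. A complementary, more computational line of attack is to use the GAP code of \cite{GAPcode} to confirm the statement for all symmetric $v_3$ configurations up to a moderate number of points and to exhibit the structure of any minimal obstruction, which would guide the general argument.
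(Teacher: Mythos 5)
You should note first that the paper offers no proof of this statement at all: it is stated as Conjecture 6.1 (an open problem), so there is no argument of the authors' to compare yours against, and the bar your proposal must clear is that of a complete proof. It does not clear it, and you say so yourself. The framing is sound and genuinely useful: passing to the contrapositive, identifying $i$ with the clique number $\omega(G)$ of the line-intersection graph $G$ (which is indeed $6$-regular, since a line meeting $\ell$ twice would share two points with it), recognizing configuration triangles as non-concurrent triples that pairwise intersect, and translating ``no monochromatic triangle'' into property~B for the triangle hypergraph $H$ are all correct, as is the observation that $\omega(G)\le 4$ forces every vertex link in $G$ to be $K_4$-free and that triangles through $\ell$ are exactly cross edges between the three pencil pairs in its link.

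The genuine gap is the one you flag as the ``main obstacle,'' and it is not a detail but the entire content of the conjecture. The classical bound $m(3)=7$ gives a lower bound on the number of edges of a minimal non-$2$-colorable sub-hypergraph $H_0$, but no upper bound on its size or support: edge-minimal non-$2$-colorable $3$-uniform hypergraphs exist with arbitrarily many edges, so ``at least seven triangles'' does not yield ``seven triangles concentrated on a small, dense set of lines,'' and there is no finite classification for you to carry out. Uniqueness of the Fano plane as the $7$-edge extremal example would only dispose of the case $\lvert E(H_0)\rvert=7$. Nor does the local structure force the density you need: a $K_4$-free graph on the six link vertices can still have up to twelve edges (Tur\'an), so many triangles can pass through a common line without producing any $K_5$ in $G$, and you have not shown that the configuration axioms rule out a sparse but non-$2$-colorable pattern of triangles spread over many lines. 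The proposed GAP search would supply evidence and perhaps a guiding minimal obstruction, but not a proof. In short: your reduction is a reasonable research program, consistent with the fact that the authors themselves leave the statement as a conjecture, but the central implication --- from failure of property~B for $H$ to the existence of five mutually intersecting lines --- is missing, and nothing in the proposal makes it likely to follow from the stated tools alone.
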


\bibliographystyle{plain}  
\bibliography{references} 

\begin{thebibliography}{10}

\bibitem{betten2000counting}
Anton Betten, Gunnar Brinkmann, and Toma{\v{z}} Pisanski.
\newblock Counting symmetric configurations v3.
\newblock {\em Discrete Applied Mathematics}, 99(1-3):331--338, 2000.

\bibitem{boben2007irreducible}
Marko Boben.
\newblock Irreducible (v3) configurations and graphs.
\newblock {\em Discrete mathematics}, 307(3-5):331--344, 2007.

\bibitem{bras2012semigroup}
Maria Bras-Amor{\'o}s and Klara Stokes.
\newblock The semigroup of combinatorial configurations.
\newblock In {\em Semigroup forum}, volume~84, pages 91--96. Springer, 2012.

\bibitem{burr1975ramsey}
Stefan~A Burr, P~Erd{\H{o}}s, and Joel~H Spencer.
\newblock Ramsey theorems for multiple copies of graphs.
\newblock {\em Transactions of the American Mathematical Society}, 209:87--99,
  1975.

\bibitem{burr1980ramsey}
Stefan~A Burr and Vera Rosta.
\newblock On the ramsey multiplicities of graphs—problems and recent results.
\newblock {\em Journal of Graph Theory}, 4(4):347--361, 1980.

\bibitem{conlon2007ramsey}
David Conlon.
\newblock On the ramsey multiplicity of complete graphs.
\newblock {\em arXiv preprint arXiv:0711.4999}, 2007.

\bibitem{coxeter1947configurations}
HSM Coxeter.
\newblock Configurations and maps.
\newblock In {\em Bulletin of the Mathematical Society}, volume~53, pages
  921--921, 1947.

\bibitem{dembowski1997finite}
Peter Dembowski.
\newblock {\em Finite geometries}, volume~44.
\newblock Springer Science \& Business Media, 1997.

\bibitem{do2019party}
Norman Do.
\newblock Party problems and ramsey theory.
\newblock {\em Vinculum}, 56(2):18--19, 2019.

\bibitem{goodman1959sets}
Adolph~W Goodman.
\newblock On sets of acquaintances and strangers at any party.
\newblock {\em The American Mathematical Monthly}, 66(9):778--783, 1959.

\bibitem{graham1990ramsey}
Ronald~L Graham, Bruce~L Rothschild, and Joel~H Spencer.
\newblock {\em Ramsey theory}, volume~20.
\newblock John Wiley \& Sons, 1990.

\bibitem{grunbaum2009configurations}
Branko Gr{\"u}nbaum.
\newblock {\em Configurations of points and lines}, volume 103.
\newblock American Mathematical Soc., 2009.

\bibitem{harary1969graph}
F.~Harary.
\newblock {\em Graph Theory (on Demand Printing Of 02787)}.
\newblock Addison-Wesley series in mathematics. Avalon Publishing, 1969.

\bibitem{harary1974generalized}
Frank Harary and G~Prins.
\newblock Generalized ramsey theory for graphs iv, the ramsey multiplicity of a
  graph.
\newblock {\em Networks}, 4(2):163--173, 1974.

\bibitem{GAPcode}
https://github.com/benjaminpeet/minmonotrianglesofconfigurations.
\newblock {GAP code for minimum number of monochromatic triangles in a
  configuration}, 4 2022.

\bibitem{jacobson1980note}
Michael~S Jacobson.
\newblock A note on ramsey multiplicity.
\newblock {\em Discrete Mathematics}, 29(2):201--203, 1980.

\bibitem{kery}
G.~Kery.
\newblock On a theorem of ramsey.
\newblock {\em Matematikai Lopok}, 15:204--224, 1964.

\bibitem{mendelsohn1987planar}
NS~Mendelsohn, R~Padmanabhan, and B~Wolk.
\newblock Planar projective configurations (part 1).
\newblock {\em Note di Matematica}, 7(1):91--112, 1987.

\bibitem{pisanski2012configurations}
Tomaz Pisanski and Brigitte Servatius.
\newblock {\em Configurations from a graphical viewpoint}.
\newblock Springer Science \& Business Media, 2012.

\bibitem{schwenk1972acquaintance}
AJ~Schwenk.
\newblock Acquaintance graph party problem.
\newblock {\em The American Mathematical Monthly}, 79(10):1113--1117, 1972.

\bibitem{van1947topology}
Mary~Patronia Van~Straten.
\newblock {\em The Topology of the Configuration of Desargues and Pappus}.
\newblock PhD thesis, University of Notre Dame, 1947.

\end{thebibliography}

\end{document}